\def\@currentlabel{2.1}\label{e:dispaa}
\def\@currentlabel{2.21}\label{e:dispau}
\def\@currentlabel{2.22}\label{e:dispav}
\def\@currentlabel{2.23}\label{e:dispaw}
\def\@currentlabel{2.24}\label{e:dispax}
\def\theequation{\thesection.\@arabic\c@equation}
\numberwithin{equation}{section}
\newcommand{\rem}{{Rm}}
\newcommand{\bla}{{\lambda,\lambda',h,h'}}
\newcommand{\blaa}{{\bar \lambda,\bar \lambda',\bar h,\bar h'}}
\newcommand{\blaaa}{{\lambda,  \lambda',\bar h,\bar h'}}
\newcommand{\gal}{{\gamma_\lambda}}
\newcommand{\gall}{{\gamma_{\lambda'}}}
\newcommand{\gabll}{{\gamma_{{\bar \lambda}'}}}
\newcommand{\gabl}{{\gamma_{\bar \lambda}}}
\newcommand{\R} {\mathbb R}
\newcommand{\cuad}{{\sqcap\kern-.68em\sqcup}}
\newcommand{\p}{\frac{n+2}{n-2}}
\newcommand{\mmm}{\frac{4}{n-2}}
\newcommand{\be}{\begin{equation}}
\newcommand{\bee}{\begin{equation*}}
\newcommand{\ee}{\end{equation}}
\newcommand{\eee}{\end{equation*}}
\newcommand{\bs}{\begin{split}}
\newcommand{\esp}{\end{split}}
\newcommand{\la}{\lambda}
\renewcommand{\theequation}{\thesection.\arabic{equation}}
 \newtheorem{lem}{Lemma}[section]
\newtheorem{defn}{Definition}[section]
\newtheorem{thm}{Theorem}[section]
\newtheorem{prop}{Proposition}[section]
\newtheorem{remark}{Remark}[section]
\newtheorem{step}{Step}
\newcommand{\bremark}{\begin{remark} \em}
\newcommand{\eremark}{\end{remark} }
\title{ Type I ancient compact solutions of  the \\ Yamabe flow}
\author{Panagiota Daskalopoulos}
\address{ {\bf P. Daskalopoulos:} Department of Mathematics, Columbia University, 2990 Broadway, New York, NY 10027, USA.}
\email{pdaskalo@math.columbia.edu}
\author{Manuel del Pino}
\address{ {\bf M. del Pino:} Departamento de Ingenier\'{\i}a Matem\'atica and CMM, Universidad de Chile, Casilla 170 Correo 3, Santiago, Chile. } \email{delpino@dim.uchile.cl
}
\author{John King}
\address{ {\bf J. King:}  School of Mathematical Sciences, The University of Nottingham, University Park,
Nottingham, NG7 2RD}\email{john.king@nottingham.ac.uk
}
\author{Natasa Sesum}
\address{ {\bf N. Sesum:} Department of Mathematics, Rutgers University, 110 Frelinghuysen road, Piscataway,  NJ 08854, USA.}\email{natasas@math.rutgers.edu
}
\begin{document}

\maketitle

\begin{abstract}
We construct new  ancient compact solutions to the  Yamabe flow.  Our solutions
are rotationally symmetric and converge, as $t \to -\infty$, to  two self-similar complete non-compact  solutions to the Yamabe flow 
moving  in opposite directions. They are type I ancient solutions.

\end{abstract}

\section{Introduction}\label{sec-into}

Let $(M,g_0)$ be a compact manifold without boundary  of dimension $n \geq 3$. If $g = \bar u^{\frac 4{n-2}} \, g_0$
is a metric conformal to $g_0$, the scalar curvature $R$  of $g$ is given in terms of the
scalar curvature $R_0$ of $g_0$ by
$$R= \bar u^{-\frac {n+2}{n-2}} \, \big ( - \bar c_n \Delta_{g_0} \bar u  + R_0 \, \bar u \big )$$
where $\Delta_{g_0}$ denotes the Laplace Beltrami operator with respect to $g_0$ and $\bar c_N =  4 (n-1)/(n-2)$.

In 1989 R. Hamilton introduced the  {\em Yamabe  flow}
\begin{equation}
\label{eq-YF}
\frac{\partial g}{\partial t} = -R\, g
\end{equation}
as an approach to solve the {\em Yamabe problem}  on manifolds of positive conformal Yamabe invariant.
It is the negative $L^2$-gradient flow of the total scalar curvature, restricted to a given conformal class. This was shown by S. Brendle \cite{S2, S1} (up to a technical condition in dim $n \geq 6$).  Significant earlier works  in this directions include those by R. Hamilton \cite{H}, B. Chow \cite{Ch}, R. Ye \cite{Y},
H. Schwetlick and M. Struwe \cite{SS} among many others. The Yamabe conjecture, was previously  shown by R. Shoen 
via elliptic methods in his seminal work \cite{S}.

%
%
%

\smallskip 
In the special case where the background manifold $M_0$ is the  sphere $S^n$ and $g_0$ is the standard
spherical metric $g_{_{S^n}}$, the Yamabe flow evolving a metric $g= \bar u^{\frac 4{n-2}}(\cdot,t)
 \, g_{_{S^n}}$ takes (after  rescaling in time by a constant)
	the form of the {\em  fast diffusion equation}
\begin{equation}
\label{eq-YFS}
(\bar u^\frac{n+2}{n-2})_t  = \Delta_{S^n} \bar u -c_n \bar u,  \qquad c_n = \frac{n(n-2)}{4}.
\end{equation}
Starting with any smooth metric $g_0$ on $S^n$, it follows by the results in \cite{Ch}, \cite{Y} and \cite{DS} that
the solution of \eqref{eq-YFS} with initial data $g_0$ will become singular at some finite time $t < T$ and
$v$ becomes spherical at time $T$,  which means that after
a normalization, the normalized flow converges to the spherical metric.  In addition, $v$ becomes extinct  at $T$.

A metric $g =  \bar u^{\frac 4{n-2}} \, g_{_{S^n}}$ may also be expressed as a metric on $\R^n$ via  stereographic
projection. It follows that if $g =  \hat u^{\frac 4{n-2}} (\cdot,t) \, g_{_{\R^n}}$ (where $g_{_{\R^n}}$ denotes the standard metric
on $\R^n$)  evolves by the Yamabe flow \eqref{eq-YF},  then $\hat  u$ satisfies (after a rescaling in time) the fast diffusion equation on $\R^n$
\begin{equation}
\label{eq-ufd}
(\hat u^p)_t = \Delta \hat u, \quad \qquad p:= \frac{n+2}{n-2}.
\end{equation}
Observe that if $g =  \hat u^{\frac 4{n-2}} (\cdot,t) \, g_{_{\R^n}}$ represents a smooth solution when lifted to $S^n$,
then $\hat{u}(\cdot,t)$ satisfies the growth condition
$$\hat u(y,t) = O  (|y|^{-(n-2)}), \qquad \mbox{as} \,\, |y| \to \infty.$$

\medskip

\begin{defn}[Type I and type II ancient solutions]\label{defn-ancient}
The solution $g = u(\cdot, t)^{\frac 4{n-2}}\, g_0$ to \eqref{eq-YF}  is called ancient if it exists for all time $t\in (-\infty, T)$,
where $T < \infty$.   We will say that the ancient solution $g$ is
of type I,  if its  Riemannian curvature  satisfies
$$\limsup_{t\to-\infty} \, ( |t| \, \max_{M_0} |\mbox{\em Rm}| \ \, (\cdot,t))< \infty.$$
An ancient  solution which is not of type I,  will be called of type
II.

\end{defn}

\smallskip

\noindent The simplest example  of  an ancient solution  to the Yamabe  flow on $S^n$ is 
the {\em  contracting spheres}. 
They are  special solutions  $\bar u$   of \eqref{eq-YFS} which depend only on time $t$ and satisfy the ODE
$$\frac {d \bar u^{\p}}{dt} =-c_n\, \bar u.$$ They are given by
 \begin{equation}
\label{eq-spheres}
\bar u_{_{S}}(p,t) = \left ( \mmm \, c_n\,  (T-t) \right )^{\frac {n-2}4}, \qquad p \in S^n.
\end{equation}
and  represent 
a sequence of round spheres shrinking to a point at time $t=T$.
They are shrinking solitons and  type I ancient solutions.

\smallskip

{\em  King  solutions:} They were  discovered
 by J.R. King \cite{K1}.  They can be expressed on $\R^n$ in closed from (after stereographic projection)   namely 
 $g= \hat  u_{_{K}} (\cdot,t)^{\frac 4{n-2}} \, g_{_{\R^n}}$, where $\hat u_{_{K}}$ is the radial function
 \begin{equation}
\label{eq-king}
\hat u_{_{K}}(y,t) = \left(\frac{a(t) }{1 + 2b(t) \, |y|^2 + |y|^4}\right)^{\frac{n-2}{4}}, \qquad y \in \R^n 
\end{equation}
and the coefficients  $a(t)$ and $b(t)$ satisfy  a certain system of ODEs.
The King solutions are {\em not
solitons} and  may be  visualized, as $t \to -\infty$,    as two Barenblatt self-similar solutions  ''glued'' together
to form a compact solution to the Yamabe  flow. They are type I ancient
solutions.

\medskip
Let us make the analogy  with the Ricci flow on $S^2$. The two  explicit compact ancient solutions to the two dimensional Ricci flow are the contracting spheres   and the King-Rosenau solutions
\cite{K1}, \cite{K2}, \cite{R}. The latter ones are the analogues of the King solution (\ref{eq-king}) to the Yamabe flow. The difference is that the King-Rosenau solutions are type II ancient solutions to the Ricci flow  while
the King solution above is  of type I.

It has been showed by Daskalopoulos, Hamilton and Sesum  \cite{DS1} that the spheres and the King-Rosenau solutions are the only compact ancient solutions to the two dimensional Ricci flow. The natural question to raise is whether the analogous statement holds  true for the Yamabe flow, that is, whether the contracting  spheres and the King solution are the only compact ancient solutions to the Yamabe flow. This occurs not to be the case as the following discussion shows.

\medskip
Indeed, in \cite{DDS} the existence of a new class of type II   ancient radially symmetric solutions of the Yamabe flow \eqref{eq-YFS}
on $S^n$  was shown. These  new solutions, as $ t \to -\infty$, may be visualized as two spheres joined by a short neck. 
Their curvature operator changes sign. We will refer to them as {\em towers of moving bubbles}.

\medskip

Since the towers of moving bubbles are shown to be type II ancient solutions, while the contracting spheres and  the King solutions are of  type I, one may still ask whether the latter two  are
the only ancient compact type I solutions of  the Yamabe flow on $S^n$, equation \eqref{eq-YFS}. In this work we will observe that this is not the case,  as will show  the existence
of other ancient compact type I solutions on $S^n$. 

\medskip

It is simpler to construct  these new solutions in cylindrical coordinates, so let us first describe the coordinate change. 
Let  $g=\hat u^{\frac 4{n-2}} (\cdot,t) \, g_{_{\R^n}}$ be  a radially symmetric  solution of \eqref{eq-ufd}.
For any $T >0$    the  cylindrical change of variables is given by 
\be\label{eqn-cv1}
u(x,\tau )   = (T-t)^{-\frac 1{p-1}} r^{\frac 2{p-1}} \, \hat u(y,t)  , \quad x=\ln  |y|,\,  \tau  = - \ln (T-t).
\ee
In this language equation \eqref{eq-ufd}  becomes
\begin{equation}\label{eqn-vv}
(u^p)_\tau =  u_{xx} +  \alpha u^p -  \beta u, \quad \beta = \frac {(n-2)^2}4 ,\quad \alpha = \frac p{p-1} = \frac {n+2}4.
\end{equation}
By  suitable scaling we can make the two constants $\alpha$ and $\beta$ in \eqref{eqn-vv} equal to 1, so that from now on
we will consider  the equation
\begin{equation}\label{eqn-v}
(u^p)_\tau =  u_{xx} +  u^p -   u.
\end{equation}
Indeed, one can see that
\be\label{eqn-cv2}
u (x,\tau)=\left ( \frac{\alpha}{\beta} \right )^{\frac 1{p-1}}\, \tilde u(\frac x{\sqrt{\beta}}, \frac \tau\alpha )
\ee
solves \eqref{eqn-v} iff the $\tilde u$ solves \eqref{eqn-vv}.

\medskip 

It is  well known (we refer the reader to the book by J.L. Vazquez \cite{V}, Section 3.2.2)  that for any given $\lambda \geq  0$  equation \eqref{eqn-v} admits
an one parameter family of traveling wave  solutions of the form $u_\lambda(x,t) = v_\la(x-\lambda \, t)$ with the behavior
\be
\label{eqn-behavior1}
v_\la(x)  = O(e^{x}), \quad \mbox{as}\,\,\  x \to - \infty.
\ee 
It follows that
$v:=v_\la$ satisfies the equation
\begin{equation}\label{eqn-v2}
v_{xx} + \lambda \, (v^p)_x +  v^p -  v=0.
\end{equation}
The solutions $v_\lambda$ define  Yamabe shrinking solitons which correspond to smooth self-similar solutions of 
\eqref{eq-ufd}  when expressed  as  metrics on $\R^n$ (the smoothness follows from condition \eqref{eqn-behavior1}). 
It was shown in \cite{DS2} that they are type I ancient solutions. 

Solutions of \eqref{eqn-v2} with  $\lambda =0$ correspond to  the steady states of equation \eqref{eqn-vv}
and are given in closed form as  the one parameter family,  
\begin{equation}\label{eqn-w1}
v_0(x) = \left ( \frac{k_n \, c \, e^{\gamma  x}}{1 +c^2 \, e^{2\gamma  x}} \right )^{\frac {n-2}2}, \qquad c>0
 \end{equation}
with  $\gamma= \frac 2{n-2}$ and $k_n = \left (\frac {4n}{n-2} \right )^{1/2}.$
They  represent geometrically the standard metric  on the  sphere. 
\smallskip

When $\lambda >0$,  solutions to \eqref{eqn-v2}  with behavior 
 \eqref{eqn-behavior1} define  smooth complete and non-compact Yamabe solitons
(shrinkers)  which all have {\em cylindrical behavior at infinity}, namely
$$v_\lambda(x) = 1 + o(1), \qquad \mbox{as}\,\, x \to + \infty.$$
In \cite{DKS} the asymptotic behavior, up to second order,   of these solutions was shown. 
In particular,  it follows from Theorem 1.1 in   \cite{DKS},   that  for any  $\la \geq 1$  there exists 
a unique solution $v_{\la}$  of  \eqref{eqn-v} which satisfies 
\be\label{eqn-v0}
v_\la(0)= \frac 12
\ee
and has the asymptotic behavior 
\be\label{eqn-vla}
v_{\la}(x) = O(e^{x}), \quad \mbox{as}\,\,\  x \to - \infty \quad \mbox{and} \quad  v_{\la}(x) = 1 - C_\la\, e^{-\gamma_\la x} +
o(e^{-\gamma_\la x}), \quad \mbox{as}\,\,\  x \to +\infty
\ee
for some constants $\gamma_\la >0$ and $C_\la >0$ (depending on $\la$).  For values of $\lambda$ in the range $0 < \lambda < 1$, the behavior
of the solutions $v_\lambda$ was also studied in \cite{DKS} and differs for dimensions $3 \leq  N  \leq 6$ and $N  \geq 6$.

\bremark For the convenience of the reader let us point out that 
the   proof of Theorem 1.1 in  \cite{DKS} is given in chapter 3 where the  solution $v$ in cylindrical coordinates satisfies 
equation  
\begin{equation}\label{eqn-v5}
{\bar \alpha}^{-1} v_{xx} + \beta (p-1)  \, v^{p-1} \, v_x +  v^p -  v=0
\end{equation}
for a parameter  $\beta >0$  and 
$$\bar \alpha := \frac{(n-2)^2}4=\frac 4{(p-1)^2}.$$
If  
\bee\label{eqn-var2}
\bar  v(x) = v(\gamma x), \qquad \gamma=\bar \alpha^{-1/2}=\frac{p-1}2
\eee
then $\bar v$ satisfies \eqref{eqn-v2} 
with 
$\la ={\beta (p-1)}/({p\gamma}) =2\beta/p.$
Hence $\beta =\beta_1:=p/2$ in Theorem 1.1 in \cite{DKS}   corresponds to 
$\lambda =1$ in our case. 
\eremark

\bremark Since equation \eqref{eqn-v2} is translation invariant the solution $v_\la$ generates an one parameter
family $v_{\la,h}$ of solutions of equation \eqref{eqn-v2} given by $v_{\la,h}= v_\la(x+h)$ which satisfy $v_{\la,h}(-h) =1/2$.
\eremark

Linearizing  equation \eqref{eqn-v2} around the constant solution $v=1$ (which corresponds to the cylinder in geometric terms)
we  obtain the equation  
\begin{equation}\label{eqn-v3}
\tilde  v_{xx} + \lambda p\, \tilde  v_x +  (p-1)\, \tilde  v=0. 
\end{equation}
Hence, assuming that $v \approx 1 - C \, e^{-\gamma_\la x}$, as   $x \to +\infty$, it follows that $\gamma_\la$ satisfies the equation
\be
\label{eqn-eqgamma}
\gamma^2 - \la p \gamma + (p-1) = 0
\ee
and its roots   are non-complex (which corresponds to non-oscillating solutions $\hat v$ of \eqref{eqn-v2}) iff 
$$\la \geq  \frac{2 \, \sqrt{p-1}}{p}.$$
All such solutions were studied in \cite{DKS}, however here we will restrict ourselves to the case 
$$\la \geq 1.$$
It has been shown in \cite{DKS} (Theorem 1.1)  that when $\la  \geq 1$,   the solution $v_{\la}$ of \eqref{eqn-v2} is monotone increasing  
and  satisfies \eqref{eqn-vla}
with 
\begin{equation}
\label{eq-gamma-la}
\gamma_\la = \frac{\la p - \sqrt{\la^2 p^2 - 4 (p-1)}}{2} >0,
\end{equation}
which corresponds to the smallest of the roots of \eqref{eqn-eqgamma}.  
When $\la=1$, equation \eqref{eqn-v2} admits the explicit one parameter family of Barenblatt solutions 
$$ v_{1,c} = \left ( \frac 1{1+ c\, e^{-(p-1)\,x} }\right )^{1/(p-1)}, \qquad c >0$$
where we recall that $p-1=4/(n-2)$ and one may choose $c=c_p$ so that 
$$ v_1 = \left ( \frac 1{1+ c \, e^{-(p-1)\,x} }\right )^{1/(p-1)}$$
satisfies the condition $v_1(0)=1/2$. 
It follows, that  in this case
\be\label{eqn-vla1}
v_1 = 1 - C_1 \, e^{-(p-1)\, x} + o(e^{-(p-1)\, x}), \quad \mbox{as}\,\,\  x \to +\infty
\ee
for a constant  $C_1=C_1(p)>0$. Notice that when $\la =1$  the roots  of \eqref{eqn-eqgamma} are given by
$$\gamma = \frac{ p \mp |p-2|}{2}.
$$
and $\gamma_1:=(p-1)$ in \eqref{eqn-vla} (as it follows from \eqref{eqn-vla1}), hence it satisfies  
$$\gamma_1 = \begin{cases} \frac{p -  |p-2|}{2}, \qquad &\mbox{if} \,\, p \leq 2\\
\frac{p +  |p-2|}{2}, \qquad &\mbox{if} \,\, p > 2.
\end{cases} 
$$
In other words, when $p>2$ (corresponding to $n<6$)  the Barenblatt solution ($\la=1$) satisfies \eqref{eqn-vla}
where $\gamma_1$ is now the largest of the roots of \eqref{eqn-eqgamma}. 

\medskip

We will next give the {\em ansatz} of the construction of new type I solutions of \eqref{eqn-v}, which will
be the main focus in this work. Assume that 
\be\label{eqn-ula0}
u_{\la,h}(x,\tau) := v_\la(x - \la \tau  + h)
\ee
is a traveling wave solution of \eqref{eqn-v} for a parameter $\la \geq 1$ and $h \in \R$.  
Since equation \eqref{eqn-v} is invariant under reflection $x \to -x$, it follows that 
\be\label{eqn-ula00}
\hat u_{\la,h}(x,\tau) := u_{\la,h}(-x,t) = v_\la(-x - \la \tau + h)
\ee
is also a solution to \eqref{eqn-v}. It   corresponds to another  traveling wave of \eqref{eqn-v2} 
which travels in the opposite direction than $u_{\lambda,h}$.
It follows from \eqref{eqn-vla} that $u_{\la,h}$ and $\hat u_{\la,h}$ satisfy the asymptotics 
\be\label{eqn-ula1}
u_{\la,h}(x,\tau) = O(e^{x}), \,\,\,    \mbox{as}\,\,    x \to - \infty \qquad \mbox{and} 
\qquad   \hat u_{\la,h}(x,\tau) = O(e^{-x})
\, \, \,  \mbox{as}\, \,  x \to +\infty. 
\ee
In addition, we have
\be\label{eqn-ula2}
u_{\la,h}(x,\tau) = 1 - C_\la\, e^{-\gamma_\la (x-\la \tau+h)} +
o(e^{-\gamma_\la (x-\la \tau+h)}), \quad    \mbox{as}\, \,  x-\la \tau +h \to +\infty. 
\ee
and also 
\be\label{eqn-ula3}
 \hat u_{\la,h}(x,\tau) = 1 - C_\la\, e^{-\gamma_\la  (-x-\la \tau+h)} + o(e^{-\gamma_\la(-x-\la \tau+h)}), \quad  \mbox{as}\,\,  x + \la \tau -h  \to -\infty. 
\ee
with $\gamma_\lambda$ given by \eqref{eq-gamma-la} and $C_\lambda >0$ depends only on $\lambda$.

\medskip
In this work, we will show the existence of four   parameter class of ancient solutions $u_{\la,\la',h,h'}$ of
equation \eqref{eqn-v} with $\la, \la'  >  1$ and  $h, h' \in \R$, 
which as $t \to -\infty$ may be visualized as the two traveling wave solutions,  $u_{\la,h}$ (traveling on the left) and $\hat u_{\la',h'}$ 
(traveling on the right). In fact, we will show in the next section that  $u_\bla$ is given by
\be\label{eqn-wbla0}
u_\bla = v_\bla - w_\bla
\ee
with $$v_\bla:= \min \big ( u_{\la,h}(\cdot,\tau), \hat u_{\la',h'}(\cdot,\tau) \big )$$
and $w_\bla >0$ an error term which is small in an appropriate norm. 

Let  $g_\bla:=  u_\bla^\mmm\, g_{cyl}$ denote the  metric on the 
cylinder $\R  \times S^{n-1}$ defined by the solution $u_\la$ of \eqref{eqn-v}. Here $g_{cyl}:= dx^2 + g_{_{S^{n-1}}}$ denotes the standard  
cylindrical metric. We have seen  that \eqref{eqn-v} is equivalent to $g_\bla$ 
satisfying the rescaled Yamabe flow $g_t = -(R - 1)g$. In addition we will show that each  $u_\bla$,  when lifted on $S^n$,  defines a {\em smooth
ancient type I}  solution to the Yamabe flow on $S^n \times (-\infty,T)$, in the sense that the norms of its curvature operators are uniformly bounded in time (which exactly means the corresponding solution to the unrescaled Yamabe flow \eqref{eq-YF} is a type I ancient solution in the sense of Definition \ref{defn-ancient}).
 Our main result is summarized as follows.

\smallskip

\begin{thm}\label{thm} For any $(\bla) \in \R^4$ such that $\la, \la' >1$ there exists an ancient solution $u_\bla$ of \eqref{eqn-v} 
defined on $\R \times (-\infty,T)$,  for some $T=T_\bla \in (-\infty, +\infty]$  and satisfies 
$$0 < u_\bla \leq v_\bla, \qquad \mbox{for all} \,\, (x,\tau) \in \R \times (-\infty, T).$$
In addition, the metric $g_\bla:= u^\mmm\, g_{cyl}$ when lifted on $S^n$ defines a smooth ancient  solution of the rescaled Yamabe flow $g_t = -(R-1)\, g$,
on $S^n \times  (-\infty, T)$. This is a  type I ancient solution in the sense that the norms of its curvature operators are uniformly bounded in time (which exactly means the corresponding solution to the unrescaled Yamabe flow \eqref{eq-YF} is a type I ancient solution in the sense of Definition \ref{defn-ancient}).
\end{thm}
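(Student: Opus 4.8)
The plan is to construct the solution $u_\bla$ as a limit of solutions to suitable initial–boundary value problems run backwards from $\tau = T$, exploiting the comparison structure of the fast diffusion equation \eqref{eqn-v}. First I would fix the approximating upper barrier $v_\bla := \min(u_{\la,h}, \hat u_{\la',h'})$. Because $u_{\la,h}$ and $\hat u_{\la',h'}$ are exact solutions of \eqref{eqn-v}, and the minimum of two (super)solutions is a supersolution of a degenerate parabolic equation of porous-medium/fast-diffusion type, $v_\bla$ is a supersolution of \eqref{eqn-v}; the asymptotics \eqref{eqn-ula1}–\eqref{eqn-ula3} show that for each fixed $\tau$ it decays like $O(e^{-|x|})$ as $|x|\to\infty$ with the two traveling profiles dominating on the left and right respectively, while near the crossover point the two profiles meet transversally. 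I would then solve \eqref{eqn-v} on a large time interval $(\tau_0, T)$ with a small positive datum at $\tau = \tau_0$ lying below $v_\bla$ (say $\ve\, v_\bla(\cdot,\tau_0)$, or a compactly supported bump), and use the maximum principle / comparison for fast diffusion to conclude $0 < u^{(\tau_0)} \le v_\bla$ on the whole slab. Letting $\tau_0 \to -\infty$ and passing to a limit via parabolic regularity estimates (interior Hölder and Schauder bounds for the uniformly parabolic equation in the region where $u$ is bounded away from $0$, plus the standard theory near $u = 0$ where \eqref{eqn-v} degenerates) yields an ancient solution $u_\bla$ on $\R\times(-\infty,T)$ satisfying $0 < u_\bla \le v_\bla$. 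This immediately gives the correct decay $u_\bla(x,\tau) = O(e^{-|x|})$ as $|x| \to \infty$, hence the lift to $S^n$ via the inverse of the change of variables \eqref{eqn-cv1} is a genuine (pointwise) metric with the required growth $\hat u(y,t) = O(|y|^{-(n-2)})$ at spatial infinity.

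The second half of the argument is to upgrade this weak/limit solution to a \emph{smooth} solution of the Yamabe flow on $S^n$ and to verify the type I bound. Here the key point is that $u_\bla$ must be bounded below by a positive quantity on compact subsets of $\R\times(-\infty,T)$ — otherwise the lifted metric degenerates. I would obtain this by a matching lower barrier: near $x = +\infty$ (resp. $x=-\infty$) use the subsolution property of $(1-\delta)\hat u_{\la',h'}$ (resp.\ $(1-\delta)u_{\la,h}$) after absorbing the lower-order term, and in the intermediate region use the strong maximum principle (the solution, being positive somewhere and satisfying a uniformly parabolic equation where it is positive, stays positive). Once $u_\bla$ is locally bounded above and below away from $0$, equation \eqref{eqn-v} is uniformly parabolic and quasilinear, so bootstrapping gives $u_\bla \in C^\infty$; the decay estimates then transfer smoothness across the poles of $S^n$ (this is exactly the smoothness mechanism already invoked for the solitons $v_\la$ via condition \eqref{eqn-behavior1}). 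For type I, I need $\limsup_{t\to-\infty}\,|t|\,\max_{S^n}|\mathrm{Rm}|(\cdot,t) < \infty$. Since the curvature of a conformally-cylindrical metric $u^{\mmm} g_{cyl}$ is controlled by $u$ and its first two spatial derivatives (in the cylindrical variable), and $0 < u_\bla \le v_\bla$ with $v_\bla$ built from the profiles $v_\la, v_{\la'}$ which are themselves type I by the result of \cite{DS2}, the curvature bound for $u_\bla$ follows once we have matching \emph{derivative} bounds $|u_{\la,h}^{-1}\pp_x u_\bla|, |u_{\la,h}^{-1}\pp_{xx}u_\bla| \le C$. These gradient bounds come from interior parabolic estimates applied to the equation satisfied by $u_\bla/v_\bla$ (which lies in $(0,1]$) together with the corresponding bounds for $v_\la$; translating back through \eqref{eqn-cv1} and unrescaling in time converts the uniform-in-$\tau$ bounds into the $|t|\,|\mathrm{Rm}|$ bound of Definition \ref{defn-ancient}.

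The main obstacle I anticipate is \textbf{not} the existence part — which is a fairly standard monotone-limit construction — but rather (i) showing the limit is nontrivial, i.e.\ $u_\bla \not\equiv 0$, as $\tau_0\to-\infty$, which requires a lower barrier that survives all the way to $\tau = -\infty$ (a self-similar or traveling subsolution pinned below $v_\bla$), and (ii) controlling the solution \emph{uniformly} near the moving crossover point of the two traveling waves, where $v_\bla$ is only Lipschitz (the minimum has a corner). At that corner the genuine solution $u_\bla$ should be strictly below $v_\bla$ and smooth, and one expects $u_\bla = v_\bla - w_\bla$ with $w_\bla>0$ a small error localized near the corner and near $x\to+\infty$ where both profiles approach $1$; making the decomposition \eqref{eqn-wbla0} rigorous with quantitative (exponentially weighted) control on $w_\bla$ — which is what is really needed for the curvature estimate to be uniform in time as $\tau\to-\infty$, since the two bubbles separate — is the technical heart of the matter, and I would expect the bulk of the paper's subsequent sections to be devoted to a fixed-point/barrier analysis of $w_\bla$ in an appropriate weighted space.
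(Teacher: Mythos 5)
Your overall skeleton (approximate from finite starting times, trap the approximations below $v_\bla$ by comparison, pass to a limit with parabolic regularity, then convert pointwise nondegeneracy into curvature bounds) matches the paper, but the two points you yourself flag as the heart of the matter --- nontriviality of the limit and uniform-in-$\tau$ control near the crossover --- are exactly where your proposal has a genuine gap, and the device you propose to fill it is the one the paper explicitly had to abandon. You start at $\tau=\tau_0$ from a small datum $\ve\,v_\bla(\cdot,\tau_0)$ and hope for a lower barrier built from $(1-\delta)u_{\la,h}$, $(1-\delta)\hat u_{\la',h'}$. For equation \eqref{eqn-v} a constant multiple $c\,u$ ($0<c<1$) of a solution is a subsolution only where $u_{xx}\geq u$, since $(c^pu^p)_\tau-(cu)_{xx}-(cu)^p+cu=(c^p-c)(u_{xx}-u)$; on the cylindrical ends of the traveling waves one has $u\approx1$, $u_{xx}\approx0$, so the inequality fails and these are not subsolutions. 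No pointwise barrier or fixed-point scheme in weighted spaces is carried out in the paper either: the remark on Hamel--Nadirashvili states that the pointwise linearized-comparison strategy breaks down because \eqref{eqn-v} is quasilinear and degenerates where $v_\bla\to0$, and that integral methods must be used instead. Your proposal defers precisely this step to ``a fixed-point/barrier analysis of $w_\bla$,'' so the nontriviality of the limit and the quantitative closeness to $v_\bla$ are not actually proved.

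The paper's mechanism is different in two concrete ways that your argument misses. First, the approximations $u_m$ are started \emph{exactly} at $v_\bla(\cdot,-m)$ (not at a small datum below it); this gives $Q_m(-m)=0$ and, since $v_\bla(\cdot,-m)$ is a supersolution, monotonicity $(u_m)_\tau\le0$, hence $R\ge0$. Second, the key estimate is integral: Lemma \ref{lem-xtau} shows the corner of $v_\bla$ is exponentially shallow (the one-sided derivatives at $x(\tau)$ are $O(e^{d\tau})$), Lemma \ref{lem-lptau} turns this into $\frac{d}{d\tau}\int v^p\le\int v^p-\int v+Ce^{d\tau}$, and a Gronwall argument for $Q_m(\tau)=\int(v^p-u_m^p)\,dx$ (using $a\le p$ because $u_m\le v\le1$) yields $Q_m(\tau)\le D_\bla e^{d\tau}$ uniformly in $m$. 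This single estimate does the work you assign to lower barriers: it forces $\max_\R u(\cdot,\tau)\ge1-\bar De^{d\tau}$, so the limit is nontrivial, and in Theorem \ref{thm2} it is combined with the mean-value inequality for the superharmonic conformal factor (from $R\ge0$) in the tip region, and with uniform parabolicity of the equation for $w=v_\la-U$ in the cylindrical region where $v_\la\ge1/2$, to get the two-sided pointwise bounds and hence the uniform curvature bound. Without an estimate of this kind, your curvature argument (derivative bounds for $u_\bla/v_\bla$) has nothing to prevent $u_\bla/v_\bla$ from degenerating to $0$ as $\tau\to-\infty$, which is exactly what must be ruled out for the type I claim.
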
  

\smallskip 

The {\em organization}  of the paper is as follows : in section \ref{sec-anzatz}  we prove Theorem \ref{thm1} which is  the existence of a  four parameter family of ancient solutions $u_\bla$. 
 In particular, we show that  each of them is exponentially close in the integral sense to  a given approximating solution which depends on the four parameters $\bla$. 
  In section \ref{sec-geom} we show that all our constructed solutions are Type I ancient solutions, as stated in Theorem \ref{thm2}.  Theorem \ref{thm} is a direct consequance of Theorems \ref{thm1} and \ref{thm2}. 

\smallskip

\bremark[KPP equation and the work of Hamel-Nadirashvili \cite{HN}] 
Equation \eqref{eqn-v}  resembles  the well known  semilinear KPP equation
\be\label{eqn-kpp} 
u_t = u_{xx} + f(u)
\ee
for a nonlinearity $f(u)$ which satisfies appropriate growth conditions (c.f. in \cite{HN}). It is well known that equation \eqref{eqn-kpp}
possesses a family of traveling wave solutions $v_\la$, $\la \geq \la_*$ with similar behavior as those of equation\eqref{eqn-v} described above. 
F. Hamel and N. Nadirashvili, in \cite{HN}, constructed ancient solutions $u_\bla$ of  equation\eqref{eqn-kpp}. The main idea in \cite{HN} is to exploit the semilinear character of equation \eqref{eqn-kpp} and estimate the error
or approximation $w_\bla$ as in \eqref{eqn-wbla0} by the solution to  the linear equation 
$$\nu_t = \nu_{xx} + f'(0)\, \nu.$$
This   allows them to estimate the error of approximation $w_\bla$ pointwise in a rather precise manner. 
However, the same method cannot be applied to   our quasilinear equation \eqref{eqn-v}, which actually becomes {\em singular}  as
$x \to \pm \infty$ (where the approximating  supersolution $v_\bla$ vanishes). In this work we need to depart from the methods in \cite{HN} and 
we have chosen to use integral methods in order to estimate  the error term $w_\bla$.

\eremark

\section{The construction of merging traveling waves}
\label{sec-anzatz}

For fixed   $\la, \la' \geq 1$, $h, h' \in \R$, let $u_{\la,h}$ and  $\hat u_{\la',h'}$ 
be the two traveling wave solutions of equation \eqref{eqn-v} as  introduced in the previous section.  
We define the {\em approximating supersolution}  $v:=v_{\la, \la', h, h'}$ by 
\be\label{eqn-va1}
v_{\la, \la', h, h'}(\cdot, \tau)  = \min \big ( u_{\la,h}(\cdot,\tau), \hat u_{\la',h'}(\cdot,\tau) \big ), \qquad  \tau \in (-\infty, +\infty).
\ee
Using  the definitions of $u_{\la,h}$ and $\hat u_{\la',h'}$ we have
\be\label{eqn-va2}
v_{\la, \la', h, h'}(\cdot, \tau) =  \min \big ( v_\la(x- \la \tau  +h), v_{\la'}(-x- \la' \tau+h')  \big ).
\ee

We will show in this section that for any $(\bla) \in \R^4$ such that $\la, \la' >1$, there exists a solution $u_\bla$ which is close in certain
sense to the approximating supersolution $v_\bla$, as stated next.

\begin{thm}\label{thm1} For any $(\bla) \in \R^4$ such that $\la, \la' >1$ there exists an ancient solution $u_\bla$ of \eqref{eqn-v} 
defined on $\R \times (-\infty,T_\bla)$ for some $T_\bla \in (-\infty, +\infty)$  which satisfies 
$$0 < u_\bla \leq v_\bla, \qquad \mbox{for all} \,\, (x,\tau) \in \R \times (-\infty, T_\bla).$$
In addition, for $\tau <<0$, the solution $u_\bla$ is close to the approximating supersolution $v_\bla$ in the sense that 
$$\int_\R |v^p_\bla - u^p_\bla| (\cdot, \tau)  \, dx \leq D_\bla \, e^{d\tau}$$
where $d=\frac{\gal \gall + (p-1)}{p}$ and $D_\bla$ is a  positive constant depending only on the dimension $n$ and $\bla$. 
Moreover, if $\bla \neq  \blaa$, then $u_\bla \neq u_\blaa$. 
\end{thm}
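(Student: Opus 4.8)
The plan is to construct $u_\bla$ as a limit of solutions of initial–boundary value problems on expanding intervals, exactly in the spirit of the Hamel–Nadirashvili construction, but replacing their pointwise linear comparison by an $L^1$ energy estimate for the fast‑diffusion operator. Concretely, for $R>0$ and $\tau_0\ll 0$ let $u_R$ solve \eqref{eqn-v} on $(-R,R)\times(\tau_0,T)$ with Dirichlet data $u_R=v_\bla$ on $x=\pm R$ and $u_R(\cdot,\tau_0)=v_\bla(\cdot,\tau_0)$. First I would check that $v_\bla=\min(u_{\la,h},\hat u_{\la',h'})$ is a (weak) \emph{supersolution} of \eqref{eqn-v}: each of $u_{\la,h},\hat u_{\la',h'}$ is an exact solution, the minimum of two solutions of a parabolic equation is a supersolution in the viscosity/distributional sense, and the quasilinear structure $(u^p)_\tau=u_{xx}+u^p-u$ behaves well under $\min$ because $s\mapsto s^p$ is increasing. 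Hence by the comparison principle $0<u_R\le v_\bla$; monotonicity in $R$ (a larger domain gives a smaller boundary constraint, but here the boundary value is the fixed supersolution, so one uses monotonicity in $\tau_0$ and a diagonal argument) yields a locally uniform limit $u_\bla$ as $R\to\infty$, $\tau_0\to-\infty$, still satisfying $0<u_\bla\le v_\bla$ and solving \eqref{eqn-v} on all of $\R\times(-\infty,T_\bla)$.

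The second step is the quantitative $L^1$ closeness $\int_\R|v_\bla^p-u_\bla^p|\,dx\le D_\bla e^{d\tau}$. Set $\phi:=v_\bla^p-u_\bla^p\ge 0$. Since $v_\bla$ is a supersolution and $u_\bla$ a solution, $\phi$ is a subsolution of the (degenerate) equation obtained by subtracting, namely $\phi_\tau\le \partial_{xx}(\Psi(v_\bla,u_\bla))+\phi$ where $\Psi$ linearizes $u\mapsto u$ against $u\mapsto u^p$; the key point is that the "extra" source generated by the corner of $\min(\cdot,\cdot)$ is a negative measure, so it only helps. Integrating in $x$ kills the $\partial_{xx}$ term (boundary terms vanish because both $v_\bla$ and $u_\bla$ decay like $e^{-|x|}$ by \eqref{eqn-ula1}), giving $\frac{d}{d\tau}\int_\R\phi\,dx\le \int_\R\phi\,dx + (\text{corner contribution}\le 0)$. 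That alone only gives growth $e^{\tau}$; to get the sharp rate $d=\frac{\gal\gall+(p-1)}{p}$ one must localize near the corner $x_\tau$ where $u_{\la,h}=\hat u_{\la',h'}$, which from \eqref{eqn-ula2}–\eqref{eqn-ula3} sits near $x_\tau\approx \frac{\la'-\la}{2}\tau$ where both factors are $\approx 1-Ce^{-\gal(\cdots)}$, so the overlap defect is of order $e^{\gal(\cdot)}e^{\gall(\cdot)}$ times the width; tracking the exponents of $e^{-\gamma_\lambda(x-\lambda\tau+h)}$ and $e^{-\gamma_{\lambda'}(-x-\lambda'\tau+h')}$ at $x=x_\tau$ produces the factor $e^{(\gal\gall+(p-1))\tau/p}$ after dividing by $p$ coming from the $u^p$ on the left‑hand side of \eqref{eqn-v}. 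So the refined argument is: estimate $\int_\R\phi(\cdot,\tau_0)\,dx\le C e^{d\tau_0}$ directly at the initialization time from \eqref{eqn-ula2}–\eqref{eqn-ula3}, then run the Grönwall inequality forward noting that the homogeneous growth $e^{\tau}$ is beaten by the ever‑improving supply of decay as $\tau_0\to-\infty$, i.e. one actually proves the bound holds at \emph{every} $\tau$ by taking $\tau_0\to-\infty$ in the family $u_R$.

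For the final assertion, that $\bla\neq\blaa\Rightarrow u_\bla\neq u_\blaa$, the idea is that the four parameters are recovered from the $t\to-\infty$ asymptotics, which $u_\bla$ inherits from $v_\bla$ by the closeness estimate. Indeed, fix any $\tau$ and let $x\to+\infty$: then $v_\bla(x,\tau)=\hat u_{\la',h'}(x,\tau)=O(e^{-x})$ while its precise exponential rate of decay to $0$ (equivalently the behavior of $v_\bla$ near the "right end" after shifting by the travel) encodes $\la'$ (through $\gamma_{\lambda'}$ via \eqref{eq-gamma-la}) and, together with the constant $C_{\lambda'}$ and the shift, $h'$; symmetrically $x\to-\infty$ recovers $\la,h$. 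Because $\int_\R|v_\bla^p-u_\bla^p|\to0$ exponentially, and the same $L^1$ machinery gives also a bound on $\int|v_\bla^p-v_\blaa^p|$ being bounded below when the parameters differ (the two supersolutions travel at different speeds or start at different positions, so their symmetric difference has $\int$ of order $1$ for $\tau\ll0$, not $o(1)$), the triangle inequality forces $u_\bla\neq u_\blaa$. I expect the \textbf{main obstacle} to be the sharp exponential rate $d$ in step two: the naive integral Grönwall estimate only yields $e^{\tau}$, and extracting the better exponent requires carefully quantifying the measure‑valued corner term and the spatial width of the overlap region as functions of $\tau$, which is where the quasilinear/degenerate nature of \eqref{eqn-v} (the equation becoming singular where $v_\bla\to0$) must be handled delicately — e.g. by working with the pressure‑type variable or by cutting off away from the vanishing tails before integrating.
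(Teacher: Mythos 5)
Your construction of the approximating solutions and the use of comparison with $v_\bla=\min(u_{\la,h},\hat u_{\la',h'})$ match the paper's scheme (the paper solves the Cauchy problem on all of $\R$ from $\tau=-m$ with data $v_\bla(\cdot,-m)$ rather than Dirichlet problems on $(-R,R)$, but that is inessential). The genuine gap is in your second step, the uniform bound $\int_\R(v_\bla^p-u^p)\,dx\le D_\bla e^{d\tau}$. Your integrated inequality has homogeneous Gr\"onwall rate $1$, i.e. $\frac{d}{d\tau}\int\phi\le\int\phi+Ce^{d\tau}$, and you propose to beat it by starting with data of size $e^{d\tau_0}$ and letting $\tau_0\to-\infty$. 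But $d=\frac{\gal\gall+(p-1)}{p}$ need \emph{not} exceed $1$: since $\gamma_\la$ is the smaller root of \eqref{eqn-eqgamma}, $\gal\gall\le p-1$, so $d\le \frac{2(p-1)}{p}$, which is $\le 1$ whenever $p\le2$ (i.e. $n\ge6$), and $d\to\frac{p-1}{p}<1$ as $\la,\la'\to\infty$ in every dimension. With rate-$1$ homogeneous growth and source $e^{d\tau}$, forward integration from $\tau_0$ gives a term of order $e^{\tau}e^{(d-1)\tau_0}$, which blows up as $\tau_0\to-\infty$ when $d<1$; so your bound is not uniform in the approximation parameter and the limit is not controlled. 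The paper closes this by \emph{not} discarding the term $-\int(v-u)$: writing $v^p-u^p=a\,(v-u)$ with $a=p\int_0^1(sv+(1-s)u)^{p-1}ds\le p$ (using $u\le v\le1$), one gets $-\int(v-u)\le-\frac1p\int(v^p-u^p)$, which lowers the homogeneous rate to $\frac{p-1}{p}$, strictly below $d$ by \eqref{eq-d}; Gr\"onwall with zero data at $\tau=-m$ then gives \eqref{eqn-qtau5} uniformly in $m$. Relatedly, you have the role of the corner term backwards: in the integral identity for $\int v^p$ the derivative jump of the minimum at the intersection point enters with a \emph{positive} sign and has exact size $(\gal+\gall)C_\bla e^{d\tau}$ by \eqref{eqn-uder1}, and this (Lemma \ref{lem-xtau} plus Lemma \ref{lem-lptau}) is precisely where the rate $d$ comes from; no separate "localization near the corner" is needed, but the jump must be computed, which requires the second-order asymptotics \eqref{eq-vlax} of $v_\la'$, absent from your sketch.

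Your argument for distinguishing the solutions also has two gaps. First, the parameters $\la',h'$ cannot be read off the decay as $x\to+\infty$ at fixed $\tau$: that decay is the universal $O(e^{-x})$ rate forced by smoothness of the lifted metric on $S^n$ (see \eqref{eqn-um3}), independent of the parameters. Second, the claim that two supersolutions with different parameters have symmetric difference of order $1$ in $L^1$ fails in the degenerate case of translates: with the same $\la,\la'$ and $h'=-h$ versus $(0,0)$, one has $v_\bla(x,\tau)=v_{\la,\la',0,0}(x+h,\tau)$, so $\int v^p$ and $\int\bar v^p$ coincide exactly and no lower bound survives. The paper treats this case separately, concluding $u(x,\tau)=u(x+h,\tau)$ and then using the precise behavior $u(x,\tau)=C(\tau)e^{x}(1+o(1))$ as $x\to-\infty$ to force $h=0$; for distinct $\la$'s it also needs the drift of the maximum point and the explicit estimate \eqref{eq-int-la}, not just "different speeds give order-one difference".
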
  

We have seen in the introduction that each $u_{\la,h}$ and $\hat u_{\la',h'}$ satisfy  conditions \eqref{eqn-ula1}-\eqref{eqn-ula3}. 
It follows that for each $\tau$  there is a unique  {\em intersection point }
$x(\tau)$ for which $u_{\la,h} (x(\tau),\tau) =\hat u_{\la',h'} (x(\tau), \tau)$.

\begin{lem}\label{lem-xtau} The intersection point $x(\tau)$ of $u_{\la,h}$ and $u_{\la',h'}$ satisfies, as $\tau \to -\infty$,  the asymptotic behavior
\begin{equation}
\label{eq-intersection}
x(\tau) =  \frac{ \gamma_{\lambda} - \gamma_{\lambda'}}{p} \, \tau 
+ \frac{1}{\gamma_{\lambda} + \gamma_{\lambda'}}\, \left(  \ln\frac{C_{\lambda}}{C_{\lambda'}} 
+ h'\gamma_{\lambda'} - h\gamma_{\lambda} \right )  + o(1).
\end{equation}
In addition at $x=x(\tau)$ we have 
\be\label{eqn-u12} 
u_{\la, h}(x(\tau), \tau) = \hat u_{\la', h'}(x(\tau), \tau) = 1- C_\bla  \, e^{d\, \tau} + o(e^{d\tau})
\ee
with
\be\label{eq-d}
d:= \frac {\gamma_{\lambda} \gamma_{\lambda'} + (p-1)}p.
\ee
and $C_\bla$ depending on $\bla$. It also follows that
\be\label{eqn-uder1} 
(u_{\la, h})_x(x(\tau), \tau) = \gamma_\la \, C_\bla  \, e^{d\, \tau} + o(e^{d\tau}), \qquad (\hat u_{\la', h'})_x(x(\tau), \tau) = -\gamma_{\la'}
 \, C_\bla  \, e^{d\, \tau} + o(e^{d\tau}). 
\ee
\end{lem}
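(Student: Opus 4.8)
The plan is to pin down $x(\tau)$ by matching the large-argument asymptotics \eqref{eqn-ula2}--\eqref{eqn-ula3} of the two traveling profiles, and then to upgrade the match to an $o(1)$ estimate for the actual crossing point (whose existence and uniqueness is already granted just before the statement). First I would extract a candidate $x_0(\tau)$ by equating the leading terms of \eqref{eqn-ula2} and \eqref{eqn-ula3},
\[
C_\la\, e^{-\gamma_\la(x-\la\tau+h)} = C_{\la'}\, e^{-\gamma_{\la'}(-x-\la'\tau+h')},
\]
taking logarithms and solving for $x$; this gives $x_0(\tau)$ affine in $\tau$ with $\tau$-slope $(\gamma_\la\la-\gamma_{\la'}\la')/(\gamma_\la+\gamma_{\la'})$ and constant term $\frac{1}{\gamma_\la+\gamma_{\la'}}\big(\ln\tfrac{C_\la}{C_{\la'}}+h'\gamma_{\la'}-h\gamma_\la\big)$. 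Using the characteristic quadratic \eqref{eqn-eqgamma} in the form $p\la\gamma_\la=\gamma_\la^2+(p-1)$ (and the same for $\la'$) gives $p(\gamma_\la\la-\gamma_{\la'}\la')=\gamma_\la^2-\gamma_{\la'}^2=(\gamma_\la-\gamma_{\la'})(\gamma_\la+\gamma_{\la'})$, so the slope collapses to $(\gamma_\la-\gamma_{\la'})/p$ and $x_0(\tau)$ is exactly the right-hand side of \eqref{eq-intersection} minus the $o(1)$.

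Next I would show $x(\tau)=x_0(\tau)+o(1)$. A short computation with \eqref{eqn-eqgamma} yields $-\gamma_\la\big(\tfrac{\gamma_\la-\gamma_{\la'}}{p}-\la\big)=\tfrac{\gamma_\la\gamma_{\la'}+(p-1)}{p}=d>0$ and, symmetrically, $-\gamma_{\la'}\big(-\tfrac{\gamma_\la-\gamma_{\la'}}{p}-\la'\big)=d$, so the $\tau$-slopes of $x_0(\tau)-\la\tau+h$ and $-x_0(\tau)-\la'\tau+h'$ are $-d/\gamma_\la<0$ and $-d/\gamma_{\la'}<0$; hence on the window $I_\tau:=[x_0(\tau)-1,\,x_0(\tau)+1]$ both profile arguments tend to $+\infty$ uniformly as $\tau\to-\infty$, and \eqref{eqn-ula2}--\eqref{eqn-ula3} apply there. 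Writing $A,B$ for the two leading exponentials and $M(\tau):=A(x_0(\tau),\tau)=B(x_0(\tau),\tau)$ (comparable to both $A$ and $B$ on $I_\tau$, with $M(\tau)\sim \mathrm{const}\cdot e^{d\tau}$), one gets $F:=u_{\la,h}-\hat u_{\la',h'}=-A+B+o(M(\tau))$ and $F_x=\gamma_\la A+\gamma_{\la'}B+o(M(\tau))$ uniformly on $I_\tau$, \emph{provided} the remainders in \eqref{eqn-vla} are $o$ of the leading exponential uniformly over the relevant $x$-range — this is what Theorem 1.1 of \cite{DKS} provides. Consequently $F(x_0(\tau)\pm1,\tau)$ has sign $\pm$ and $F_x\ge \tfrac12\gamma_\la e^{-\gamma_\la}M(\tau)>0$ on $I_\tau$ for $\tau\ll0$, so $x(\tau)\in I_\tau$ and the mean value theorem gives $|x(\tau)-x_0(\tau)|=|F(x_0(\tau),\tau)|/|F_x(\xi,\tau)|=o(1)$, which is \eqref{eq-intersection}.

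For \eqref{eqn-u12} and \eqref{eqn-uder1} I would substitute $x=x(\tau)=x_0(\tau)+o(1)$ into the expansions. By the identity of the previous paragraph, the $\tau$-coefficient of $-\gamma_\la(x(\tau)-\la\tau+h)$ is $\tfrac{\gamma_\la\gamma_{\la'}-\gamma_\la^2}{p}+\gamma_\la\la=\tfrac{\gamma_\la\gamma_{\la'}+(p-1)}{p}=d$ as in \eqref{eq-d}, so \eqref{eqn-ula2} gives $u_{\la,h}(x(\tau),\tau)=1-C_\bla e^{d\tau}+o(e^{d\tau})$ with $C_\bla>0$ the constant obtained by collecting the $\tau$-independent parts (the $o(1)$ in $x(\tau)$ only multiplies $C_\bla$ by $1+o(1)$, absorbed into the error); the symmetric computation from \eqref{eqn-ula3} produces the same exponent $d$ and the same constant, consistently, since $x_0(\tau)$ was chosen precisely to equate the two leading terms. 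Finally, differentiating \eqref{eqn-vla} — legitimate from the ODE estimates in \cite{DKS} — gives $v_\la'(y)=\gamma_\la C_\la e^{-\gamma_\la y}+o(e^{-\gamma_\la y})$; evaluating at $x(\tau)$, and recalling $\hat u_{\la',h'}(x,\tau)=v_{\la'}(-x-\la'\tau+h')$, yields \eqref{eqn-uder1}.

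The main obstacle is the uniformity needed in the second step: at the crossing both profiles have slope of size $e^{d\tau}\to0$, so a crude bound on the remainders in \eqref{eqn-vla} would only yield a bounded — not $o(1)$ — error for the location of the zero of $F$. One genuinely needs the correction terms in \eqref{eqn-vla}, and in its first derivative, to be of strictly smaller exponential order than the leading term, uniformly over the moving window $I_\tau$; this second-order asymptotic information is exactly what Theorem 1.1 of \cite{DKS} supplies, and once it is available the rest of the argument is elementary algebra with the quadratic \eqref{eqn-eqgamma}.
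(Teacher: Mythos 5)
Your treatment of \eqref{eq-intersection} and \eqref{eqn-u12} follows the same route as the paper: equate the leading exponentials from \eqref{eqn-ula2}--\eqref{eqn-ula3}, solve for the crossing point, and use the quadratic \eqref{eqn-eqgamma} in the form $p\la\gamma_\la=\gamma_\la^2+(p-1)$ to collapse the slope to $(\gamma_\la-\gamma_{\la'})/p$ and to identify the exponent $d$. In fact your second step (locating the true zero of $F=u_{\la,h}-\hat u_{\la',h'}$ inside a moving window via monotonicity and the mean value theorem) is more careful than the paper, which simply states that solving the matching relation ``readily implies'' the asymptotics of $x(\tau)$; that extra care is welcome, though it does lean on uniformity of the remainders over the window, which you correctly flag.

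The one place where your argument has a gap is \eqref{eqn-uder1}. You obtain the derivative expansion by ``differentiating \eqref{eqn-vla}'', justified only by a reference to the ODE estimates in \cite{DKS}; but Theorem 1.1 of \cite{DKS}, as invoked in this paper, supplies the expansion of $v_\la$ itself, not of $(v_\la)_x$, and one cannot in general differentiate an asymptotic relation term by term. The paper closes exactly this point with a short self-contained ODE argument: from \eqref{eqn-v2} one has $\left((v_\la)_x+\la v_\la^p\right)_x=v_\la-v_\la^p\ge 0$, whence $(v_\la)_x$ has a limit at $+\infty$ which must be $0$; integrating $\left((v_\la)_x+\la v_\la^p\right)_x=C_\la(p-1)e^{-\gamma_\la x}+o(e^{-\gamma_\la x})$ from $x$ to $+\infty$ and using $v_\la^p=1-pC_\la e^{-\gamma_\la x}+o(e^{-\gamma_\la x})$ together with $p\la\gamma_\la=\gamma_\la^2+(p-1)$ yields $(v_\la)_x=C_\la\gamma_\la e^{-\gamma_\la x}+o(e^{-\gamma_\la x})$, which is then evaluated at $x(\tau)-\la\tau+h$. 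You should either reproduce this derivation (it is three lines once you know to integrate the equation for $(v_\la)_x+\la v_\la^p$) or verify that the derivative asymptotics is explicitly stated in \cite{DKS}; as written, the appeal to differentiation is the weak link, while the rest of your proposal is sound and essentially the paper's argument.
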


\begin{proof} Using  the asymptotic behavior \eqref{eqn-ula2} and \eqref{eqn-ula3} it follows  that at $x=x(\tau)$ we have 
\[  C_{\lambda} e^{-\gamma_{\lambda}\, (x - \lambda\tau +h)} + o(e^{-\gamma_{\lambda}\, ( x + h - \lambda \tau)})
\approx  C_{\lambda'} e^{-\gamma_{\lambda'}\, (-x - \lambda'\tau -h')} + o(e^{-\gamma_{\lambda'}\, (-x' - \lambda'\tau -h')}).\]
Solving for $x$ readily implies that
$$x(\tau) =  \frac{\lambda \gamma_{\lambda} - \lambda' \gamma_{\lambda'}}{\gamma_{\lambda} + \gamma_{\lambda'}} \, \tau 
+ \frac{1}{\gamma_{\lambda} + \gamma_{\lambda'}}\, \left(  \ln\frac{C_{\lambda}}{C_{\lambda'}} 
+ h'\gamma_{\lambda'} - h\gamma_{\lambda} \right )  + o(1).$$
Using  equation \eqref{eqn-eqgamma}, we may eliminate  the $\lambda, \lambda'$  from the above expression substituting
$$\la \gamma_\lambda = \frac{\gamma^2_\lambda  + (p-1)}{p}, \qquad \la'  \gamma_{\lambda'} = \frac{\gamma_{\lambda'}^2 + (p-1)}{p}$$
and obtain  \eqref{eq-intersection}. With this choice of $x(\tau)$ we have 
$$ \gamma_\lambda \big ( x(\tau) - \lambda \, \tau \big ) = \gamma_\lambda 
 \left ( \frac{ \gamma_{\lambda} - \gamma_{\lambda'}}{p} - \lambda \right ) \, \tau +  c_\bla$$
for some constant $c_\bla$ depending on $\bla$ and eliminating  $\lambda$ as above we obtain
$$ \gamma_\lambda \big ( x(\tau) - \lambda \, \tau \big ) = - \frac{\gamma_\la \gamma_{\la'} + (p-1)}{p}\, \tau + c_\bla.$$
Setting $d:=\frac{\gamma_\la \gamma_{\la'} + (p-1)}{p}$, we conclude using \eqref{eqn-ula2} that 
$$u_{\la,h} (x(\tau), \tau)  =  1- C_\bla \, e^{d\tau} + o(e^{d\tau})$$
for a constant $C_\bla >0$ depending on $\bla$. 
Since $u_{\la,h} (x(\tau), \tau) = u_{\la',h'} (x(\tau), \tau)$, the \eqref{eqn-u12} follows. 

It remains to show \eqref{eqn-uder1}. Recall that $u_{\la,h}(x,\tau) = v_\la(x-\la\tau+h)$. First we claim that 
\begin{equation}
\label{eq-der-zero}
\lim_{x\to+\infty} (v_{\la})_x = 0.
\end{equation}
To prove the claim note that by \eqref{eqn-v2} we have
\[\left( (v_{\la})_x + \la v_{\la}^p\right)_x = v_{\la} - v_{\la}^p \ge 0,\]
since $v_\la \le 1$, implying there exists a finite limit $\lim_{x\to +\infty} ((v_\la)_x + \la v_\la^p)$ and hence the $\lim_{x\to +\infty} (v_\la)_x = c$. We claim $c = 0$. Indeed, if $c > 0$, there would exist an $x_0$ so that for all $x \ge x_0$ we would have $(v_{\la})_x \ge  c/2$. This would imply that 
\[v_{\la}(x) = v_{\la}(x_0) + \int_{x_0}^x (v_{\la})_x\, dx \ge \frac c2 (x - x_0), \qquad x\ge x_0\]
contradicting that the $\lim_{x\to +\infty} v_{\la}(x) = 1$. Using that $v_\la > 0$ we argue similarly in the case we assume $c < 0$.

We will next prove more precise asymptotics on the derivatives of $v_{\la}$, which will yield \eqref{eqn-uder1}. By \eqref{eqn-v2} we have
\[\left( (v_{\la})_x + \la v_{\la}^p\right)_x = v_{\la} - v_{\la}^p.\]
On the other hand, by \eqref{eqn-vla} we have
\[v_{\la} - v_{\la}^p = C_{\la}\, (p - 1)\, e^{-\gamma_{\la}x} + o(e^{-\gamma_{\la}x}), \qquad \mbox{for} \,\,\,\, x >> 1\]
and hence,
\[\left( (v_{\la})_x + \lambda \, v_{\la}^p\right)_x = C_{\la}\, (p - 1)\, e^{-\gamma_{\la}x} + o(e^{-\gamma_{\la}x}).\]
Integrating it from $x$ to $+\infty$ and using \eqref{eq-der-zero} and that the $\lim_{x\to +\infty} v_{\la}(x) = 1$ yields
\[(v_\la)_x = \la -  \la \, v_{\la}^p - \frac{C_{\la}\, (p-1)}{\gamma_{\la}}\, e^{-\gamma_{\la} x} + o(e^{-\gamma_{\la}x}).\]
Asymptotics \eqref{eqn-vla} implies $v_{\la}^p = 1 - p \, C_{\la} e^{-\gamma_{\la}x} + o(e^{-\gamma_{\la}x})$, and therefore,
\begin{equation}
\label{eq-vlax}
\begin{split}
(v_{\la})_x &= C_{\la}\, e^{-\gamma_{\la}x}\, \left(p\la - \frac{p-1}{\gamma_{\la}}\right) + o(e^{-\gamma_{\la} x}) \\
&= C_{\la}\gamma_{\la} e^{-\gamma_{\la}x} + o(e^{-\gamma_{\la} x}), \qquad \mbox{as} \,\,\,\, x\to +\infty,
\end{split}
\end{equation}
where we have used that $p\la \gamma_{\la} = \gamma_{\la}^2 + (p - 1)$.  Finally, since $x(\tau) - \la\tau +h >> 1$ for $\tau << -1$, we get \eqref{eqn-uder1} by plugging $x(\tau) - \la\tau +h$ in \eqref{eq-vlax}. 
\end{proof}

Denote briefly by $v := v_\bla$. Then we have the following integral identity.

\begin{lem} 
\label{lem-lptau}
We have 
\begin{equation} 
\label{eqn-lptau1} 
\frac {d}{d\tau} \int_{\R} v^p \, dx =  \int_{\R} v^p  \, dx -  \int_{\R} v \, dx  + (\gamma_\la + \gamma_{\la'})  \, C_\bla  \, e^{d\, \tau} + o(e^{d\tau}). 
\end{equation}
\end{lem}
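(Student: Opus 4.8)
The plan is to use that $v=v_\bla$ is smooth off the unique intersection point $x(\tau)$ of Lemma~\ref{lem-xtau}, to differentiate $\int_\R v^p\,dx$ by splitting it into the two half-lines $\{x<x(\tau)\}$ and $\{x>x(\tau)\}$ on which $v$ equals $u_{\la,h}$, resp.\ $\hat u_{\la',h'}$, and then to evaluate the resulting boundary terms through the derivative asymptotics \eqref{eqn-uder1}. Two preliminary facts are needed. First, the structure of $v$: since $\la,\la'>1$ the profiles $v_\la,v_{\la'}$ are monotone increasing, so $u_{\la,h}(\cdot,\tau)$ is increasing and $\hat u_{\la',h'}(\cdot,\tau)$ decreasing in $x$; hence $u_{\la,h}-\hat u_{\la',h'}$ is strictly increasing in $x$, which with the limits in \eqref{eqn-ula1}--\eqref{eqn-ula3} gives that $x(\tau)$ is the unique crossing, that it is transversal, and (via \eqref{eqn-uder1}, which for $\tau\ll 0$ yields $(u_{\la,h})_x(x(\tau),\tau)>0>(\hat u_{\la',h'})_x(x(\tau),\tau)$) $C^1$ in $\tau$ by the implicit function theorem, with $v_\bla=u_{\la,h}$ on $\{x\le x(\tau)\}$ and $v_\bla=\hat u_{\la',h'}$ on $\{x\ge x(\tau)\}$. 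Second, the behaviour of the derivatives of $v_\la$ at $\pm\infty$: by the argument proving \eqref{eq-der-zero} (applied also at $-\infty$, using $v_\la>0$ together with $v_\la\to0$) one has $\lim_{x\to-\infty}(v_\la)_x=\lim_{x\to+\infty}(v_\la)_x=0$; moreover, integrating $\big((v_\la)_x+\la v_\la^p\big)_x=v_\la-v_\la^p=O(e^{x})$ from $-\infty$ (where the left side tends to $0$) gives $(v_\la)_x=O(e^{x})$ and hence $(v_\la^p)_x=O(e^{px})$ as $x\to-\infty$.

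Write $u_1:=u_{\la,h}$, $u_2:=\hat u_{\la',h'}$. The integrals $\int_{-\infty}^{x(\tau)}u_1^p\,dx$ and $\int_{x(\tau)}^{+\infty}u_2^p\,dx$ are finite by \eqref{eqn-ula1}, so $\int_\R v^p\,dx$ is their sum. Differentiating in $\tau$ by the Leibniz rule, the two endpoint contributions $x'(\tau)u_1^p(x(\tau),\tau)$ and $-x'(\tau)u_2^p(x(\tau),\tau)$ cancel since $u_1=u_2$ at $x=x(\tau)$ by \eqref{eqn-u12}; differentiation under the integral sign is legitimate because, for $\tau$ in a bounded interval, $u_i^p$ and $(u_i^p)_\tau$ are dominated by a fixed integrable exponential tail (the bound $(v_\la^p)_x=O(e^{px})$ from the preliminary step, translated by the moving argument). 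Hence
\[ \frac{d}{d\tau}\int_\R v^p\,dx=\int_{-\infty}^{x(\tau)}(u_1^p)_\tau\,dx+\int_{x(\tau)}^{+\infty}(u_2^p)_\tau\,dx .\]

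Now substitute \eqref{eqn-v}, $(u_i^p)_\tau=(u_i)_{xx}+u_i^p-u_i$. The terms $u_i^p-u_i$ reassemble over the two half-lines into $\int_\R(v^p-v)\,dx$. For the second-derivative terms,
\[ \int_{-\infty}^{x(\tau)}(u_1)_{xx}\,dx=(u_1)_x(x(\tau),\tau)-\lim_{x\to-\infty}(u_1)_x , \]
\[ \int_{x(\tau)}^{+\infty}(u_2)_{xx}\,dx=\lim_{x\to+\infty}(u_2)_x-(u_2)_x(x(\tau),\tau) , \]
and both limits vanish by the preliminary step (for $u_2$ after the reflection $x\mapsto-x$). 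Therefore
\[ \frac{d}{d\tau}\int_\R v^p\,dx=\int_\R v^p\,dx-\int_\R v\,dx+(u_1)_x(x(\tau),\tau)-(u_2)_x(x(\tau),\tau) ,\]
and inserting \eqref{eqn-uder1}, $(u_1)_x(x(\tau),\tau)=\gamma_\la C_\bla e^{d\tau}+o(e^{d\tau})$ and $(u_2)_x(x(\tau),\tau)=-\gamma_{\la'}C_\bla e^{d\tau}+o(e^{d\tau})$, the boundary term becomes $(\gamma_\la+\gamma_{\la'})C_\bla e^{d\tau}+o(e^{d\tau})$, which is precisely \eqref{eqn-lptau1}.

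I expect the only genuinely technical points to be routine: the $C^1$ regularity of $x(\tau)$, the legitimacy of the Leibniz rule with a moving endpoint, and the vanishing of the end derivatives, all of which follow from monotonicity and the exponential tail bounds above. The mechanism itself is transparent: $v=\min(u_1,u_2)$ has a concave kink at $x(\tau)$, contributing the jump $-\big((u_1)_x-(u_2)_x\big)$ to its distributional second derivative, and Lemma~\ref{lem-xtau} fixes the size of that jump.
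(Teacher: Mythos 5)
Your proof is correct and follows essentially the same route as the paper: split $\int_\R v^p$ at the intersection point $x(\tau)$, apply the Leibniz rule (the moving-endpoint terms cancel since $u_1=u_2$ there), substitute the equation, use that $(u_1)_x\to 0$ as $x\to-\infty$ and $(u_2)_x\to 0$ as $x\to+\infty$ (proved just as in \eqref{eq-der-zero}--\eqref{eq-vlax} but at the other end), and evaluate the remaining boundary terms via \eqref{eqn-uder1}. The extra justifications you supply (regularity of $x(\tau)$, dominated convergence for differentiation under the integral) are fine but are exactly the routine points the paper leaves implicit.
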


\begin{proof} For simplicity set $u_1:= u_{\la,h}$ and $u_2:=u_{\la',h'}.$ Then   $u_1(\cdot, \tau), u_2(\cdot, \tau)$  are solutions to \eqref{eqn-v}
on $(-\infty, x(\tau))$, $(x(\tau), + \infty)$ respectively and by definition we have $v=u_1$ on $(-\infty, x(\tau))$ and 
$v=u_2$ on $(x(\tau),+\infty). $  In addition, because of \eqref{eqn-ula1} we have
$$\lim_{x \to -\infty} (u_1)_x (x, \tau) = \lim_{x \to +\infty} (u_2)_x (x, \tau) =0.$$
Note this can be proved in the same way as we have proved \eqref{eq-vlax}, just using the asymptotics of our solitons at $x \to -\infty$ instead of $x\to +\infty$.
Hence, integrating  equation  \eqref{eqn-v} for $u_1$ on $(-\infty, x(\tau))$ and  equation  \eqref{eqn-v} for $u_2$ on $(x(\tau), + \infty)$
we obtain
$$\frac {d}{d\tau} \int_{-\infty}^{x(\tau)} u_1^p \, dx =  \int_{-\infty}^{x(\tau)} u_1^p  \, dx -  \int_{-\infty}^{x(\tau)} u_1 \, dx +
(u_1)_x (x(\tau),\tau) + x'(\tau) \, u_1^p (x(\tau),\tau)$$
and
$$\frac {d}{d\tau} \int^{+\infty}_{x(\tau)} u_2^p \, dx =  \int^{+\infty}_{x(\tau)}  u_2^p  \, dx - \int^{+\infty}_{x(\tau)}  u_2 \, dx - 
(u_2)_x (x(\tau),\tau) -  x'(\tau) \, u_2^p (x(\tau),\tau).$$
Since $u_1(x(\tau), \tau)=u_2(x(\tau),\tau)$, adding  the last  two equalities yields
\bee 
\frac {d}{d\tau} \int_{\R} v^p \, dx =  \int_{\R} v^p  \, dx -  \int_{\R} v \, dx +
(u_1)_x (x(\tau),\tau) - (u_2)_x (x(\tau),\tau). 
\eee
Combining this with \eqref{eqn-uder1} readily yields   \eqref{eqn-lptau1}.  
\end{proof}

For any $m \in \mathbb{N}$, let  $u_m$ denote the solution of the initial value problem 
\be\label{eqn-um}
\begin{cases}
(u^p)_\tau =  u_{xx} - u + u^p \qquad & x\in  \R, \,\,  \tau > -m \\
\, u(\cdot,-m) = v_\bla(\cdot, -m) \qquad & x \in  \R.
\end{cases}
\ee
with exponent
$p= \frac{n+2}{n-2} >1.$

\begin{lem}[Uniform barrier from above]\label{lem-1} The solution $u_m$ exists for all time $-m \leq \tau < +\infty$
and satisfies
\be\label{eqn-baum}
u_m \leq  v_\bla
\ee 
\end{lem}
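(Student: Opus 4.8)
The plan is to prove \eqref{eqn-baum} by a comparison (maximum principle) argument, showing that $v_\bla$ is a supersolution of the equation $(u^p)_\tau = u_{xx} - u + u^p$ on $\R \times (-m,+\infty)$, while $u_m$ solves the equation exactly with the same initial data. Since $v_\bla = \min(u_1, u_2)$ (writing $u_1 := u_{\la,h}$, $u_2 := \hat u_{\la',h'}$) is the minimum of two exact solutions, away from the intersection curve $x = x(\tau)$ the function $v_\bla$ agrees with one of the $u_i$ and hence solves the equation there; the only issue is what happens along $x = x(\tau)$, where $v_\bla$ has a corner in $x$.

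The key point is that the minimum of two supersolutions of a parabolic equation of the form $(u^p)_\tau = u_{xx} + f(u)$ is again a supersolution, provided the corner points the right way. Concretely, at the intersection point $x(\tau)$ we have $v_\bla = u_1 = u_2$, and since $v_\bla = u_1$ for $x < x(\tau)$ and $v_\bla = u_2$ for $x > x(\tau)$, taking a minimum creates a downward corner: in the distributional sense $(v_\bla)_{xx}$ picks up a nonpositive Dirac mass along $x=x(\tau)$ of size $\big[(u_2)_x - (u_1)_x\big](x(\tau),\tau)$, which by \eqref{eqn-uder1} equals $-(\gamma_\la + \gamma_{\la'})C_\bla e^{d\tau} + o(e^{d\tau}) < 0$ for $\tau$ sufficiently negative (and in fact $(u_2)_x - (u_1)_x \le 0$ at the crossing holds for all $\tau$ by the ordering of $u_1,u_2$ across $x(\tau)$). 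Hence $v_\bla$ is a (weak/viscosity) supersolution: $(v_\bla^p)_\tau \ge (v_\bla)_{xx} + v_\bla^p - v_\bla$ in the sense of distributions. Since $u_m$ has the same initial data $v_\bla(\cdot,-m)$ at $\tau = -m$, the comparison principle for the (degenerate) parabolic fast-diffusion-type equation — applied after noting both functions are bounded and the equation is uniformly parabolic away from $u = 0$, with the standard approximation near $u=0$ — yields $u_m \le v_\bla$ on $\R \times [-m,+\infty)$. Global-in-$\tau$ existence of $u_m$ then follows because it is trapped between $0$ and the bounded supersolution $v_\bla \le 1$, so no blow-up can occur.

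The steps I would carry out, in order: (i) record that $v_\bla \le 1$ and that $v_\bla$ solves \eqref{eqn-v} classically on $\{x \ne x(\tau)\}$; (ii) verify the corner condition $(u_2)_x(x(\tau),\tau) - (u_1)_x(x(\tau),\tau) \le 0$ — either from \eqref{eqn-uder1} for $\tau \ll 0$ together with a direct argument (the graph of $u_1$ lies above that of $u_2$ to the left of $x(\tau)$ and below to the right) for all $\tau$ — and conclude $v_\bla$ is a distributional supersolution; (iii) set up the comparison: regularize the equation (e.g. replace $u^p$ by $(u+\delta)^p - \delta^p$ or work with the pressure variable), apply the maximum principle to $v_\bla - u_m^{(\delta)}$, and pass to the limit; (iv) deduce \eqref{eqn-baum} and, from the uniform bound $0 < u_m \le v_\bla \le 1$, the global existence asserted in the statement.

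The main obstacle I expect is the rigorous handling of the corner together with the degeneracy: one must justify that the downward kink of $v_\bla$ along $x=x(\tau)$ genuinely makes it a supersolution in whatever weak formulation is being used for this quasilinear (and, near $x \to \pm\infty$, degenerate) equation, and that the comparison principle applies to merely continuous sub/supersolutions. This is where care is needed — either by a viscosity-solution formulation, or by an explicit smoothing of the corner (replacing $\min(u_1,u_2)$ by a smoothed minimum $\min_\epsilon$ and checking the extra error terms are lower-order), or by testing the weak formulation against nonnegative test functions and using that the jump term in $(v_\bla)_{xx}$ has the favorable sign. Everything else — the ordering of $u_1,u_2$ across the crossing, boundedness, and the trapping argument for global existence — is routine given the asymptotics \eqref{eqn-ula1}–\eqref{eqn-ula3} and Lemma \ref{lem-xtau}.
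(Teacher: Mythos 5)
Your argument is correct in outline, but it takes a genuinely different---and considerably heavier---route than the paper. The paper never shows that $v_\bla$ itself is a supersolution: it simply observes that the common initial datum $u_m(\cdot,-m)=v_\bla(\cdot,-m)$ lies below \emph{each} of the two exact traveling waves separately, $u_m(\cdot,-m)\le u_{\la,h}(\cdot,-m)$ and $u_m(\cdot,-m)\le \hat u_{\la',h'}(\cdot,-m)$, applies the comparison principle twice, each time against a smooth positive exact solution of \eqref{eqn-v} (so no corner and no weak formulation ever enters), and concludes $u_m\le u_{\la,h}$ and $u_m\le \hat u_{\la',h'}$ for $\tau\ge -m$, hence $u_m\le \min\big(u_{\la,h},\hat u_{\la',h'}\big)=v_\bla$. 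This sidesteps exactly the point you flag as the main obstacle: justifying that the downward kink of $\min(u_1,u_2)$ along $x=x(\tau)$ makes $v_\bla$ a distributional or viscosity supersolution of the degenerate quasilinear equation, and that comparison applies to such a merely Lipschitz supersolution. Your corner-sign verification is right (the crossing forces $(u_2)_x-(u_1)_x\le 0$ there, since $u_1-u_2$ changes sign from negative to positive), and indeed the paper uses precisely this ``minimum is a supersolution'' device later, in the proof of Lemma \ref{lem-3}, so your strategy would succeed once the weak comparison step is carried out carefully; but for the present lemma it is extra work. What the paper's route buys is that only the standard comparison principle between ordered solutions of the fast diffusion equation is needed; what yours buys is the supersolution property of $v_\bla$, which is not required here. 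The global-existence conclusion is the same in both: the trapping $0\le u_m\le v_\bla\le 1$ rules out blow-up from the $+u^p$ term, and standard quasilinear parabolic theory gives existence for all $\tau\ge -m$ (note that possible finite-time \emph{extinction}, addressed in Lemma \ref{lem-3}, is not an obstruction to this statement).
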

\begin{proof}
The bound \eqref{eqn-baum}  simply follows from the comparison principle.  Since  $u_m(\cdot,-m) \leq u_{\la,h}(\cdot,-m)$ and 
$u_m (\cdot, -m) \leq \hat u_{\la',h'}(\cdot, -m)$
we have
$u_m \leq u_{\la,h}$  and  $u_m \leq \hat u_{\la',h'}$ for $\tau \geq  -m$, concluding that
$u_m \leq v_{\la,\la',h,h'}(\cdot,\tau)   := \min \big ( u_{\la,h}(\cdot,\tau), \hat u_{\la',h'}(\cdot,\tau) \big ).$
The bound \eqref{eqn-baum} and standard arguments on quasilinear parabolic pde   imply that the solution $u_m$ exists for all  $-m \leq \tau < +\infty$. 
\end{proof} 

\medskip
\begin{remark}
{\em In what follows we show a bound  from below for $u_m$,  which is uniform in $m$ and will guarantee that the solutions
$u_m$ will stay positive for $-m \leq \tau < T$, for some uniform in $m$ time $T$. }
\eremark

\bigskip

\begin{lem}[The profile for  $u_m$]\label{lem-3}  
There exists a number $T_m \leq +\infty$ such that $u_m >0$ on $\R \times [-m,T_m)$ and in the 
cases were $T_m < +\infty$, $u_m \equiv 0$ for $ \tau \geq  T_m$. In addition, for all $\tau \in [-m,T_m)$, 
$u_m(\cdot,t)$ satisfies the asymptotic behavior
\be\label{eqn-um3}
u_m(x,\tau)= O(e^{x}), \,\, \,  \mbox{as}\,\,    x \to - \infty  \qquad \mbox{and} \qquad u_m(x,\tau)= O(e^{-x}), \,\, \,  \mbox{as}\,\,    x \to + \infty.
\ee
Moreover,  the function $u_m(x,\tau)$ is decreasing in $\tau$,  for all $\tau > -m$.

\end{lem}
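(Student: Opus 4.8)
The plan is to establish each assertion of Lemma~\ref{lem-3} in turn, relying on the comparison principle for the quasilinear equation \eqref{eqn-v} together with the barrier \eqref{eqn-baum} from Lemma~\ref{lem-1} and the structure of the initial datum $v_\bla(\cdot,-m)$.

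\emph{Positivity and the extinction time.} First I would observe that the initial datum $v_\bla(\cdot,-m) = \min(u_{\la,h}(\cdot,-m), \hat u_{\la',h'}(\cdot,-m))$ is strictly positive on $\R$, continuous, and has the exponential decay \eqref{eqn-um3} at $\tau=-m$ inherited from \eqref{eqn-ula1}. By the strong maximum principle for the (degenerate) parabolic equation, as long as $u_m$ remains a classical positive solution it stays strictly positive; I define
$$T_m := \sup\{ \tau \ge -m : u_m(\cdot,s) > 0 \text{ on } \R \text{ for all } s \in [-m,\tau]\} \le +\infty.$$
If $T_m < +\infty$, then by continuity $u_m(\cdot,T_m)$ vanishes somewhere; since $u_m$ is decreasing in $\tau$ (proved below) and bounded above by $v_\bla \le 1$, one checks that the only way a positive solution of \eqref{eqn-v} bounded above by $1$ can touch zero at an interior point is if it is identically zero — this uses that the spatial operator $u_{xx} - u + u^p$ is $\le u_{xx}$ when $0 \le u \le 1$, so $u_m$ is a subsolution of the fast-diffusion equation $(u^p)_\tau = u_{xx}$, for which zero sets propagate only forward, and combined with monotonicity in $\tau$ this forces $u_m \equiv 0$ for $\tau \ge T_m$. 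I would make this rigorous by comparing $u_m$ after time $T_m$ with the zero solution from below and with a small supersolution that is already zero on a neighborhood of the vanishing point.

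\emph{Spatial asymptotics.} For \eqref{eqn-um3} I would use comparison in $x$: the functions $u_{\la,h}$ and $\hat u_{\la',h'}$ are themselves solutions of \eqref{eqn-v}, and $u_m(\cdot,-m) \le u_{\la,h}(\cdot,-m)$ as well as $u_m(\cdot,-m)\le \hat u_{\la',h'}(\cdot,-m)$, so Lemma~\ref{lem-1} already gives $u_m \le v_\bla \le u_{\la,h}$ and $u_m \le \hat u_{\la',h'}$ for all $\tau$; hence $u_m(x,\tau) = O(e^x)$ as $x\to-\infty$ from $u_{\la,h}$ and $u_m(x,\tau)=O(e^{-x})$ as $x\to+\infty$ from $\hat u_{\la',h'}$, using \eqref{eqn-ula1}. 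This is immediate from what is already in the paper and requires no real work.

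\emph{Monotonicity in $\tau$.} This is the main point. The idea is that the initial datum is a subsolution: $v_\bla(\cdot,-m) = \min(u_{\la,h}, \hat u_{\la',h'})(\cdot,-m)$ is a minimum of two solutions, hence a (viscosity/distributional) subsolution of \eqref{eqn-v} at $\tau=-m$. Therefore $\partial_\tau (u_m^p) \ge 0$ in an appropriate weak sense at the initial time, and one propagates this forward. Concretely I would fix small $\delta>0$ and compare $u_m(x,\tau)$ with $u_m(x,\tau+\delta)$ on $\R\times[-m,T_m-\delta)$: at $\tau=-m$ we have $u_m(x,-m+\delta) \le u_m(x,-m)$ because $v_\bla(\cdot,-m)$ is a subsolution (so the solution starting from it decreases initially, at least after time-translation $u_m(\cdot,-m+\delta)$ is the solution at time $\delta$ of the flow started from the subsolution $v_\bla(\cdot,-m)$, which lies below $v_\bla(\cdot,-m) = u_m(\cdot,-m)$); both $u_m(\cdot,\tau)$ and $u_m(\cdot,\tau+\delta)$ solve \eqref{eqn-v} on $[-m,T_m-\delta)$; and the boundary conditions at $x=\pm\infty$ are compatible by \eqref{eqn-um3}. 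The comparison principle then yields $u_m(\cdot,\tau+\delta) \le u_m(\cdot,\tau)$ for all such $\tau$, and letting $\delta\to 0$ gives $\partial_\tau u_m \le 0$.

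\emph{Where the difficulty lies.} The genuinely delicate step is justifying that the minimum of two solutions serves as a subsolution and that the comparison principle applies across the corner at the intersection point $x(\tau)$ and, more seriously, near $x=\pm\infty$ where the equation \eqref{eqn-v} degenerates (the diffusion coefficient $p u^{p-1}$ vanishes as $u\to 0$). One must either work with the pressure-type variable or truncate: solve on a large box $[-L,L]$ with Dirichlet data matching $v_\bla$, compare there, and let $L\to\infty$, using the uniform decay bounds \eqref{eqn-ula1} to control the boundary contributions and to pass to the limit. Verifying that $v_\bla$ is a subsolution amounts to checking that the distributional Laplacian of a minimum of two $C^2$ functions has the right sign of jump at the crossing point $x(\tau)$ — namely that $(u_2)_x - (u_1)_x \le 0$ there, which holds because $u_1 = u_{\la,h}$ lies above $u_2 = \hat u_{\la',h'}$ to the left of $x(\tau)$ and below it to the right, so the graph of $v_\bla$ has a concave corner; this is exactly the sign computed in \eqref{eqn-uder1}. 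I would spell this out carefully, as it is the only nonroutine ingredient; the rest follows from standard parabolic comparison arguments applied after the truncation.
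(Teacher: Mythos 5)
Your overall strategy (compare time translates of $u_m$ after showing the flow initially moves downward from its initial datum, with a truncation to handle the degeneracy near $x=\pm\infty$) has the right shape, and your derivation of the asymptotics \eqref{eqn-um3} from $u_m \le v_\bla$ and \eqref{eqn-ula1} is fine. But the central step, monotonicity in $\tau$, rests on a sign error that breaks the argument. The minimum of two solutions of \eqref{eqn-v} is a \emph{supersolution}, not a subsolution (the kink at the crossing point is concave, so the distributional second derivative picks up a negative Dirac mass), and, more importantly, ``minimum of two solutions'' is not by itself the reason the flow decreases: what is actually needed is that the time-frozen initial profile $f_m(x) := v_\bla(x,-m) = \min\big(v_\la(x+\la m+h),\, v_{\la'}(-x-\la' m+h')\big)$ is a supersolution of the \emph{stationary} equation, i.e. $f_m'' + f_m^p - f_m \le 0$ in the distributional sense. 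This is where the traveling-wave structure enters, and it is the ingredient you never use: by \eqref{eqn-v2}, $w_1(x)=v_\la(x+\la m+h)$ satisfies $w_1''+w_1^p-w_1 = -\la\,(v_\la^p)' < 0$ precisely because $v_\la'>0$ for $\la\ge 1$ and the wave travels in the direction that makes it pointwise decreasing in time; similarly for $w_2$. If the waves traveled the other way, the frozen profile would be a stationary \emph{sub}solution and the solution starting from it would \emph{increase} in $\tau$, so no argument that ignores the sign of $\la v_\la'$ can prove the lemma. As written, your assertion ``$\partial_\tau(u_m^p)\ge 0$'' states the opposite of the claim, and your description of which wave lies above which on either side of $x(\tau)$ is also reversed ($u_{\la,h}$ lies \emph{below} $\hat u_{\la',h'}$ to the left of the crossing and above it to the right); the inequality $(u_2)_x-(u_1)_x\le 0$ you want at the corner is exactly the supersolution (concave-kink) condition, not a subsolution condition.

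Once the sign is corrected, the rest goes through along the lines you indicate and essentially as in the paper: since $f_m$ is a strict stationary supersolution, comparison gives $u_m(\cdot,-m+\delta)\le f_m = u_m(\cdot,-m)$, and comparing the time translate $u_m(\cdot,\cdot+\delta)$ with $u_m$ yields $\partial_\tau u_m\le 0$; the truncation/approximation you propose to justify comparison across the corner and near $x=\pm\infty$ is the ``simple approximation argument'' the paper alludes to. For the first claim (positivity up to a time $T_m$ with $u_m\equiv 0$ afterwards), your mechanism ``$u_m$ is a subsolution of the fast diffusion equation, whose zero sets propagate only forward'' is not a proof: being a subsolution gives no lower bound whatsoever. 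The paper instead uses that, thanks to \eqref{eqn-um3}, the metric $u_m^{4/(n-2)}g_{cyl}$ lifts to $S^n$ and invokes the known results for the Yamabe flow / fast diffusion on the compact sphere (positivity and smoothness up to a single extinction time, at which the solution vanishes identically). You should either cite those results, as the paper does, or supply a genuine positivity argument (e.g. a barrier or Harnack-type argument exploiting the infinite speed of propagation of fast diffusion), rather than the subsolution remark; note also that your positivity step quotes the monotonicity ``proved below'', so it inherits the gap above.
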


\begin{proof} The first two claims  in this   lemma readily follow from well known results for fast-diffusion equations and the Yamabe flow on $S^n$,
since  $g = u_m(\cdot,\tau)^{\frac{4}{n-2}}\, g_{cyl}$ corresponds to a solution of the Yamabe flow and the behavior  \eqref{eqn-um3} is equivalent to 
saying that  $g$ can be lifted to a smooth metric on  $S^n$. 

We will next show the monotonicity in $\tau$ of the solutions $u_m$. 
It follows from \eqref{eqn-v} that the  function $w_1(x)=v_\la(x+ \la m+h)$ satisfies
\bee\begin{split}
w_1'' + w_1^p - w_1&= v_\la''(x+ \la m+h) + v_\la^p(x+ \la m+h) - v_\la( x+ \la m+h) \\
&=   - \la \, v_\la'(x+ \la m +h) <0
\end{split}
\eee
and similarly  the function $w_2(x)=v_{\la'}(- x - \la' m+h')$ satisfies
\bee\begin{split}
w_2'' + w_2^p - w_2&= v_{\la'}''(-x- \la' m+h) + v_{\la'}^p(-x - \la' m+h) - v_{\la'}( -x- \la' m+h) \\
&=   - \la' \, v'_{\la'} (-x - \la' m +h) <0
\end{split}
\eee
since $v_\la' >0$ for all $\la  \geq 1$. It follows that $f_m:= \min (w_1,w_2)$ is a supersolution, namely it satisfies
$$f_m '' + f_m^p - f_m <   0$$
in the distributional sense. This implies the function $u_m(x,\tau)$ is decreasing in $\tau$ for any $\tau > -m$, $x\in \mathbb{R}$.
Hence, the result follows by a simple approximation argument.  

\end{proof}

\bremark  Each  solution $u_{\la,h}$  satisfies $(u_{\la,h})_\tau \leq 0$, since $(u_{\la,h})_\tau = - \la \, v_\la'(x-\la t+h) < 0$,
because $v_\la' < 0$. 

\eremark

\bremark The inequality $(u_m)_\tau \leq 0$ implies that the scalar curvature $R_m$ of the
corresponding metric defined by the solution $u_m$ is nonnegative. 
Recall that  for a solution $u$ of \eqref{eqn-v}, $R \geq 0$ corresponds to $(u^p)_\tau \leq u^p$. 
\eremark

%

We will next show that each $u_m$ is sufficiently close to $v_\bla$ in certain sense and this happens uniformly in $m$. This will assure that
the limit as $m \to +\infty$  is a non-trivial solution of \eqref{eqn-v}. 
We begin with the  following crucial for our purposes estimate which is a consequence of Lemma \ref{lem-lptau}. 

\begin{prop} We have 
\be\label{eqn-qtau5}
 Q_m(\tau):=  \int_{\R} ( v^p - u_m^p) \leq D_\bla \, e^{d\tau}
 \ee
for a constant $D_\bla >0$ depending only on $\bla$.
\end{prop}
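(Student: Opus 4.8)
The plan is to derive a differential inequality for the quantity $Q_m(\tau) = \int_\R (v^p - u_m^p)\, dx$ and then integrate it. The key input is Lemma \ref{lem-lptau}, which gives an exact evolution formula for $\frac{d}{d\tau}\int_\R v^p\, dx$, together with the PDE \eqref{eqn-v} for $u_m$, which gives $\frac{d}{d\tau}\int_\R u_m^p\, dx = \int_\R u_m^p\, dx - \int_\R u_m\, dx$ (plus boundary terms at $x=\pm\infty$ which vanish by the decay \eqref{eqn-um3}, proved just as in Lemma \ref{lem-xtau}). Subtracting, I would obtain
$$
\frac{d}{d\tau} Q_m(\tau) = \int_\R (v^p - u_m^p)\, dx - \int_\R (v - u_m)\, dx + (\gamma_\la + \gamma_{\la'})\, C_\bla\, e^{d\tau} + o(e^{d\tau}).
$$
The first term on the right is exactly $Q_m(\tau)$, so the equation has the shape $Q_m' = Q_m - \int_\R(v-u_m)\, dx + (\gamma_\la+\gamma_{\la'}) C_\bla e^{d\tau} + o(e^{d\tau})$.

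The next step is to control the sign of $\int_\R (v - u_m)\, dx$. Since $0 < u_m \le v$ by Lemma \ref{lem-1}, we have $v - u_m \ge 0$, hence $\int_\R (v-u_m)\, dx \ge 0$, and therefore
$$
\frac{d}{d\tau} Q_m(\tau) \le Q_m(\tau) + (\gamma_\la + \gamma_{\la'})\, C_\bla\, e^{d\tau} + o(e^{d\tau}) \le Q_m(\tau) + C\, e^{d\tau}
$$
for a constant $C$ depending only on $\bla$, valid for $\tau$ sufficiently negative (and in fact on all of $[-m, T_m)$ after absorbing the $o(e^{d\tau})$ term into a larger constant, since $v^p - u_m^p$ and the soliton quantities are globally controlled). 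Multiplying by the integrating factor $e^{-\tau}$, we get $\frac{d}{d\tau}\big(e^{-\tau} Q_m(\tau)\big) \le C\, e^{(d-1)\tau}$. Here one uses $d = \frac{\gamma_\la\gamma_{\la'} + (p-1)}{p} < 1$: indeed, since $\gamma_\la, \gamma_{\la'}$ are each the smaller root of \eqref{eqn-eqgamma} for $\la, \la' > 1$, one checks $\gamma_\la < \sqrt{p-1} < \gamma_{\la'}$-type bounds... more simply, $\gamma_\la\gamma_{\la'} + (p-1) < p$ is equivalent to $(1-\gamma_\la)(something)>0$; in any case $0 < d < 1$ follows from $\gamma_\la, \gamma_{\la'} < 1$ when $\la,\la' > 1$ (each root is below the value $1$ attained in the limiting/threshold regime). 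So $d - 1 < 0$.

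Integrating from $-m$ to $\tau$ and using the initial condition $Q_m(-m) = \int_\R (v^p - u_m^p)(\cdot, -m)\, dx = 0$ (since $u_m(\cdot,-m) = v_\bla(\cdot,-m)$), I obtain
$$
e^{-\tau} Q_m(\tau) \le \frac{C}{1-d}\, e^{(d-1)(-m)} \cdot 0 + \frac{C}{d-1}\big(e^{(d-1)\tau} - e^{(d-1)(-m)}\big) \le \frac{C}{1-d}\, e^{(d-1)\tau},
$$
where I have used $d - 1 < 0$ so that $e^{(d-1)(-m)} \ge e^{(d-1)\tau}$ for $\tau \ge -m$ makes the subtracted term help rather than hurt — more carefully, $\int_{-m}^\tau e^{(d-1)s}\, ds = \frac{e^{(d-1)\tau} - e^{(d-1)(-m)}}{d-1} \le \frac{e^{(d-1)\tau}}{1-d}$ because $d-1<0$. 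Hence $Q_m(\tau) \le \frac{C}{1-d}\, e^{d\tau} =: D_\bla\, e^{d\tau}$, with $D_\bla$ depending only on $n$ and $\bla$ and, crucially, independent of $m$. This is \eqref{eqn-qtau5}.

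The main obstacle I anticipate is the bookkeeping of the $o(e^{d\tau})$ error terms inherited from Lemma \ref{lem-lptau}: these are asymptotic as $\tau \to -\infty$ and must be converted into a genuine bound of the form $\le C e^{d\tau}$ uniformly on the whole interval $[-m, T_m)$, not just near $-\infty$. This requires noting that on any compact $\tau$-interval all the relevant integrals $\int v^p$, $\int v$, $(u_1)_x(x(\tau),\tau)$, etc., are bounded (they depend only on the fixed solitons $v_\la, v_{\la'}$, not on $m$), so the constant can simply be enlarged; the $m$-independence is preserved because $v_\bla$ does not depend on $m$. A secondary point requiring care is justifying the differentiation under the integral sign and the vanishing of the flux terms at $x = \pm\infty$ for $u_m$, which follows from the exponential decay \eqref{eqn-um3} exactly as the analogous fact was established for the solitons in the proof of Lemma \ref{lem-xtau}.
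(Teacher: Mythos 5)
Your derivation of the differential inequality is fine up to the point where you discard the term $-\int_\R (v-u_m)\,dx$ using only its sign: that step leaves you with $Q_m' \leq Q_m + C e^{d\tau}$, whose linear coefficient $1$ is \emph{larger} than $d$, and this is fatal. Indeed, as you yourself correctly observe, $\gal,\gall<1$ for $\la,\la'>1$, so $d=\frac{\gal\gall+(p-1)}{p}<1$. Then in the Gronwall step the integrand $e^{(d-1)s}$ is \emph{decreasing} in $s$, and
$$\int_{-m}^{\tau} e^{(d-1)s}\,ds \;=\; \frac{e^{(1-d)m}-e^{(d-1)\tau}}{1-d}\;\sim\;\frac{e^{(1-d)m}}{1-d},$$
not $\leq \frac{e^{(d-1)\tau}}{1-d}$ as you claim; you have the monotonicity backwards. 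The bound your inequality actually yields is $Q_m(\tau)\lesssim e^{\tau}e^{(1-d)m}$, which blows up as $m\to\infty$ for fixed $\tau$, so it is not uniform in $m$ and cannot produce \eqref{eqn-qtau5} (nor survive the limit needed in Proposition \ref{prop-limit}). Had $d>1$ your scheme would work, but it does not here.

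The missing idea is to \emph{keep} the term $\int_\R(v-u_m)\,dx$ and compare it with $Q_m$ rather than with $0$. Writing $v^p-u_m^p=a\,(v-u_m)$ with $a=p\int_0^1\bigl(sv+(1-s)u_m\bigr)^{p-1}ds$ and using $u_m\leq v\leq 1$ (so $a\leq p$), one gets $\int_\R(v-u_m)\,dx\geq \frac1p\int_\R(v^p-u_m^p)\,dx=\frac1p Q_m(\tau)$, and hence
$$\frac{d}{d\tau}Q_m(\tau)\;\leq\;\frac{p-1}{p}\,Q_m(\tau)+\bar C_\bla\,e^{d\tau}.$$
Now the coefficient $\frac{p-1}{p}$ is strictly below $d$ (since $\gal\gall>0$), so with the integrating factor $e^{-\frac{p-1}{p}\tau}$ the forcing exponent $\mu=d-\frac{p-1}{p}$ is positive, $\int_{-m}^{\tau}e^{\mu s}\,ds\leq \mu^{-1}e^{\mu\tau}$ is bounded independently of $m$, and together with $Q_m(-m)=0$ this gives $Q_m(\tau)\leq D_\bla e^{d\tau}$ uniformly in $m$. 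This is exactly how the paper argues; your bookkeeping of the $o(e^{d\tau})$ terms and of the vanishing flux at $x=\pm\infty$ is acceptable, but without the sharpened coefficient the Gronwall argument cannot close.
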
 

\begin{proof} Since $u_m$ satisfies \eqref{eqn-v}, integrating this equation on $\R$ readily yields
$$\frac {d}{d\tau} \int_{\R} u^p \, dx =  \int_{\R} u^p  \, dx -  \int_{\R} u \, dx.$$
Here we used that
$$\lim_{x \pm \infty} (u_m)_x (x, \tau)=0$$
which easily follows from \eqref{eqn-um3} and the fact that the metric $u_m^{\frac{4}{n-2}}\, g_{cyl}$ when lifted to a sphere defines a smooth metric.
If we combine this with \eqref{eqn-lptau1} we obtain
\be\label{eqn-umtau}
\frac {d}{d\tau} \int_{\R} (v^p - u_m^p)  \, dx  \leq   \int_{\R} (v^p - u_m^p)  \, dx -  \int_{\R} (v-u_m)  \, dx  +
\bar C_\bla \, e^{d\, \tau}
\ee
with $\bar C_\bla :=  (\gamma_\la + \gamma_{\la'})  \, C_\bla +1 $. 
Next set $w_m:=v-u_m$ and observe that since $u_m \leq v$  we have $w_m \geq 0$. 
Since 
\[(v^p  - u_m^p)\,  = a\,  (v_\delta  - u_m), \qquad \mbox{where} \qquad a := p\, \, \int_0^1 (s\, v   +(1-s)\, u_m)^{p-1}\, ds\] 
we may write \eqref{eqn-umtau} as 
\[\frac{d}{d\tau}\int_{\mathbb{R}} a \, w_m \, dx =  \frac{p-1}{p}\int_{\mathbb{R}} a \, w_m \, dx +  \frac1p\, \int (a - p) w_m\, dx +  
\bar C_\bla \, e^{d\, \tau} + o(e^{d\tau}).\] 
Note that,  since both $v  \leq 1$ and $u_m \leq 1$, we have 
\[
a - p = p\, \left( \int_0^1 (s\, v  + (1-s)\, u_m)^{p-1}\, ds - 1\right) \leq 0. 
\]
Hence, using also that $w_m \geq 0$, we conclude 
$$
\frac{d}{d\tau}\int_{\mathbb{R}} a\,  w_m \, dx \le \frac{p-1}{p}\int_{\mathbb{R}} a \, w_m\, dx  + \bar C_\bla \, e^{d\, \tau}.
$$
Setting 
$$Q_m(\tau):= \int_{\R} (v^p - u_m^p)(\cdot, \tau)   \, dx = \int_{\R} a\, (v - u_m) (\cdot,\tau)\, dx$$
we obtain
\bee
\frac {d}{d\tau} Q_m(\tau) \leq \frac{p-1}{p} \, Q_m(\tau) +  \bar C_\bla \, e^{d\, \tau}.
\eee
Equivalently, if
$$\hat Q_m(\tau):= e^{-\frac{(p-1)\tau}{p}} Q_m(\tau)$$
and $\mu:= d  - \frac{p-1}{p}$ we have
$$
\frac {d}{d\tau} \hat Q_m(\tau) \leq  \bar C_\bla \, e^{ \mu \, \tau}  + o(e^{\mu \tau}).
$$
Next observe  that by \eqref{eq-d} we have   $d > \frac{p-1}{p}$, hence $\mu >0$. Also,  since $w_m=0$ at $\tau=-m$, we have $\hat Q_m(-m)=0$.
Hence, the above differential inequality yields the bound 
$$\hat Q_m(\tau) \leq \mu^{-1} \bar C_\bla  \, e^{ \mu \, \tau}, \qquad  \tau > -m$$
from which the bound \eqref{eqn-qtau5} readily follows. 
\end{proof}

\begin{prop}[Passing to the limit]\label{prop-limit} After passing to a subsequence, the sequence $\{ u_m \}$ 
converges, uniformly on compact subsets of $\R \times (-\infty, +\infty)$, to an ancient  solution $u=u_\bla$ of \eqref{eqn-v}.
It is positive, $u >0$, on $\R \times (-\infty, T_\bla)$ for some $T_\bla$, depending only on $\bla$ and the dimension $n$. 
In addition, $u(x,\tau)$ is decreasing in $\tau$ for all $(x,\tau) \in \R \times (-\infty, +\infty)$ and satisfies conditions   \eqref{eqn-um3}.
\end{prop}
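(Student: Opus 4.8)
The plan is to extract the ancient solution $u_\bla$ as a limit of the approximating solutions $u_m$ constructed in \eqref{eqn-um}, using the uniform estimates already established. First I would observe that by Lemma \ref{lem-1} and Lemma \ref{lem-3} the family $\{u_m\}$ is uniformly bounded, $0 < u_m \le v_\bla \le 1$, on its domain of positivity, and each $u_m$ is monotone decreasing in $\tau$. The uniform upper barrier $v_\bla$ combined with standard interior regularity theory for the (uniformly parabolic, on regions where $u_m$ is bounded away from zero) quasilinear equation \eqref{eqn-v} gives local $C^{2,\alpha}$ bounds on $u_m$ on compact subsets of $\R\times(-\infty,+\infty)$ that stay in the positivity region, uniform in $m$. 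By Arzel\`a--Ascoli and a diagonal argument over an exhaustion of $\R\times\R$ by compact sets, a subsequence of $\{u_m\}$ converges, uniformly on compact subsets together with the relevant derivatives, to a limit $u=u_\bla$ which solves \eqref{eqn-v} classically wherever it is positive, satisfies $0\le u\le v_\bla$, and inherits the monotonicity $u_\tau \le 0$.

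Next I would show that the limit is nontrivial and identify the extinction time. The key input is the estimate \eqref{eqn-qtau5}: $Q_m(\tau)=\int_\R (v^p-u_m^p)\,dx \le D_\bla\, e^{d\tau}$, uniformly in $m$, with $d>0$. Passing to the limit (using Fatou, since $v^p-u_m^p\ge 0$ converges pointwise) gives $\int_\R (v_\bla^p - u_\bla^p)\,dx \le D_\bla e^{d\tau}$, which is the desired closeness statement and in particular forces $u_\bla$ to agree with $v_\bla$ in the integral sense as $\tau\to-\infty$; hence $u_\bla \not\equiv 0$. To produce a uniform-in-$m$ positivity time $T_\bla$, I would use that $v_\bla$ is itself bounded below near the intersection point on a fixed time interval (by Lemma \ref{lem-xtau}, $v_\bla$ at $x(\tau)$ is $1-C_\bla e^{d\tau}+o(e^{d\tau})$, so it is close to $1$ for $\tau$ very negative and stays positive on, say, $(-\infty, T_\bla)$ for a suitable $T_\bla$ depending only on $\bla$ and $n$), and then combine the integral closeness \eqref{eqn-qtau5} with the monotonicity and the parabolic regularity to get a pointwise lower bound for $u_m$, hence for $u_\bla$, on $\R\times(-\infty,T_\bla)$, uniform in $m$. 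Alternatively, since $u_m$ is decreasing in $\tau$, the extinction time $T_m$ of $u_m$ is monotone in $m$ and one shows $\liminf_m T_m \ge T_\bla$ directly from the fact that $\int_\R u_m^p\,dx \ge \int_\R v_\bla^p\,dx - D_\bla e^{d\tau}$ stays bounded away from $0$ for $\tau<T_\bla$.

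Finally, the asymptotic conditions \eqref{eqn-um3} for $u=u_\bla$: since $0<u\le v_\bla$ and $v_\bla = \min(u_{\la,h},\hat u_{\la',h'})$ satisfies $v_\bla(x,\tau)=O(e^x)$ as $x\to-\infty$ and $O(e^{-x})$ as $x\to+\infty$ by \eqref{eqn-ula1}, the upper bound immediately gives the correct decay from above; the matching lower bound follows because the geometric picture forces $u_m^{4/(n-2)}g_{cyl}$ to lift to a smooth metric on $S^n$ for each $m$ (this is the content of the first part of Lemma \ref{lem-3}), and this property passes to the limit, or else one invokes the standard barrier construction near $x=\pm\infty$ for the fast diffusion equation on $\R^n$. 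I expect the main obstacle to be the uniform-in-$m$ positivity on a fixed time interval: one must rule out the possibility that the $u_m$ develop a shrinking neck at the intersection point faster than the integral bound \eqref{eqn-qtau5} would suggest, i.e. one must upgrade the $L^1$ closeness of $v_\bla^p-u_m^p$ to a pointwise lower bound for $u_m$ near $x(\tau)$, which requires combining \eqref{eqn-qtau5} with local parabolic estimates (e.g. a local $L^1$-to-$L^\infty$ smoothing estimate, or a Harnack inequality for \eqref{eqn-v} in the region where $u_m$ is comparable to $1$) in a way that is uniform in $m$.
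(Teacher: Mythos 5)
Your overall strategy is the same as the paper's: extract $u=u_\bla$ by compactness from the uniform barrier $u_m\le v_\bla$ and the monotonicity of Lemma \ref{lem-3}, pass the uniform estimate \eqref{eqn-qtau5} to the limit, and use closeness to $v_\bla$ near the intersection point to rule out triviality and to locate $T_\bla$. The difficulty is that the step you yourself flag as the main obstacle --- upgrading the $L^1$ bound to a pointwise lower bound near $x(\tau)$ --- is precisely the step you do not supply, and your proposed remedies (a uniform-in-$m$ Harnack inequality or a local $L^1$-to-$L^\infty$ smoothing estimate) are both heavier than needed and aimed at the wrong target: no uniform-in-$m$ positivity is required, since one can work directly with the limit $u$. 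Concretely, after passing \eqref{eqn-qtau5} to the limit (this is \eqref{eqn-qtau6}), note that the weight $\hat a\ge v^{p-1}$ is bounded below on the unit interval $[x(\tau)-1,x(\tau)]$, where $v=u_{\la,h}$ is within $O(e^{d\tau})$ of $1$ by \eqref{eqn-ula2} and \eqref{eqn-u12}. Since the integrand $\hat a\,(v-u)$ is nonnegative, a mean-value (pigeonhole) argument on this interval of length one produces a single point $x\in[x(\tau)-1,x(\tau)]$ with $0\le v(x,\tau)-u(x,\tau)\le D_\bla e^{d\tau}$, hence $u(x,\tau)\ge 1-\bar D_\bla e^{d\tau}$. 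This yields the two-sided bound \eqref{eq-2-sided} for $m(\tau)=\max_\R u(\cdot,\tau)$, which shows $m(\tau)>0$ for all $\tau\le\tau_0(\bla,n)$ and defines $T_\bla$ as the maximal such time; positivity of $u$ on all of $\R\times(-\infty,T_\bla)$ then follows from standard estimates for the fast-diffusion structure of \eqref{eqn-v} (positivity propagates; extinction is global), with no neck-pinching analysis uniform in $m$. No pointwise control of $u$ on a whole neighborhood of $x(\tau)$ is ever needed --- one point suffices.

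Your alternative route via monotonicity of the extinction times $T_m$ does not close the gap either: it only shows each $u_m$ remains positive up to a common time, while the limit $u$ could a priori degenerate, so the pointwise lower bound on $\max_\R u$ is still what is needed. The remaining ingredients of your proposal are consistent with the paper: equicontinuity plus a diagonal argument for the subsequential limit (the paper invokes standard estimates; note one needs equicontinuity on all compact sets, not only in the positivity region, which holds for uniformly bounded fast-diffusion solutions), monotonicity in $\tau$ inherited from Lemma \ref{lem-3}, and the asymptotics \eqref{eqn-um3}, which follow from $u\le v_\bla\le C(\tau)\min(e^{x},e^{-x})$ alone since \eqref{eqn-um3} only asserts upper ($O$) bounds --- the ``matching lower bound'' you discuss is not part of the claim.
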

\begin{proof} The uniform bound $u_m \leq v$ implies that the sequence of solutions $\{ u_m \}$ is uniformly bounded on compact subsets of 
$\R \times (-\infty, + \infty)$, hence by standard estimates it is equicontinuous. Hence, passing to a subsequence  it converges to a limit
$u=u_\bla$ and $u(x,\tau)$ is decreasing in $\tau$ for all $x \in \R$, since the same holds for each $u_m$ be the previous lemma. 

We will next show that the limit $u$ is non trivial. Since $u_m (\cdot, \tau) \leq v(\cdot, \tau)$, 
$v^{p-1}  \leq  a \leq  p\, v^{p-1}$
and $v(\cdot, \tau) \leq C(\tau) \, \min \, ( e^{x}, e^{-x} )$, we can pass to the limit $m \to +\infty$ in \eqref{eqn-qtau5} and
using the dominated 
convergence theorem we obtain the bound 
\be\label{eqn-qtau6}
Q (\tau) :=   \int_{\R} \hat a\, (v - u) (\cdot,\tau)\, dx  \leq  D_{\lambda,\lambda',h,h'}\, e^{d\tau}, 
\ee
for $$\hat a =   p\,  \int_0^1 (s\, v + (1-s)\, u)^{p-1}\, ds = \frac{v^p - u^p}{v-u}.$$
Observe that
$$Q(\tau):=  \int_{\R} a \, (v-u) (\cdot,\tau)  \, dx =  \int_{\R} v^p (\cdot,\tau) \, dx -  \int_{\R} u^p(\cdot,\tau) \, dx.$$
In particular, this implies for every fixed $\tau << -1$  there exists a point $(x,\tau)$, such that 
$x  \in [x(\tau)-1,x(\tau)]$ and  
$$0 \leq v(x ,\tau) - u(x,\tau) \leq  D_{\lambda,\lambda',h,h'} \, e^{d\tau}.$$
Recalling that  $v(x,\tau) \approx  1 -  C_\lambda e^{\lambda \gamma_\lambda \tau} 
e^{-\gamma_\lambda (x+h)}$, whenever $x - \lambda\tau >> 1$ and $x\in [x(\tau) - 1, x(\tau)]$, we conclude that 
$$u(x,\tau) \geq 1 - \bar{D}_{\lambda,\lambda',h,h'} \, e^{d\tau}.$$
This implies that 
\begin{equation}
\label{eq-m-below}
m(\tau):= \max_{\R} u(\cdot,\tau)  \ge 1 - \bar{D}_{\lambda,\lambda',h,h'}\, e^{d\tau}.
\end{equation}
On the other hand,  using \eqref{eqn-u12} we have
$$u(x,\tau) \leq v(x,\tau) \leq  v(x(\tau),\tau) = 1 - C_\bla \, e^{d\tau} + o(e^{d\tau}).$$
Hence,
\begin{equation}
\label{eq-2-sided}
1 - \bar{D}_{\lambda,\lambda',h,h'}\, e^{d\tau} \leq  m(\tau)  \leq 1 - \bar C_\bla \, e^{d\tau}.
\end{equation}
for $\bar C_\bla:= C_\bla +1$. This in particular implies that $m(\tau) >0$ for all $\tau \leq  \tau_0$ if  $\tau_0 <<0$. Hence, there exists
a number $T=T_\bla$ such that $m(\tau) >0$ for all $t \leq T_\bla$ and we may assume that $T_\bla$ is the maximal such time (note that 
$T_\bla$ may be equal to $+\infty$). Standard estimates then imply that $u(x,\tau) >0$ for all $(x,\tau) \in \R \times (-\infty, T_\bla)$. We also have that $u(\cdot, \tau)$ satisfies conditions \eqref{eqn-um3}.
\end{proof}

Next we show how to distinguish between solutions that we have constructed using different parameters. More precisely, we have the following result.

\begin{prop}[Distinguishing between solutions] \label{prop-distinguish} Let   $\lambda,\lambda', \bar \lambda, \bar \lambda' >1$ and  $(\lambda, \lambda',h,h') 
\neq (\bar \lambda, \bar  \lambda', \bar h, \bar h')$, then $u_{\bla}  \neq  u_{\blaa}$.  
\end{prop}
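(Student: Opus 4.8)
The plan is to argue that different parameters $(\bla)$ produce solutions with genuinely different asymptotic behavior as $\tau \to -\infty$, so that two such solutions cannot coincide. The key intermediate fact, already available to us, is the two-sided estimate \eqref{eq-2-sided}: the maximum $m(\tau) = \max_\R u_\bla(\cdot,\tau)$ behaves like $1 - \Theta(e^{d\tau})$, and more importantly the ``neck'' region where $u_\bla$ dips below $1$ is located, to leading order, at the intersection point $x(\tau)$ whose asymptotics are given precisely by Lemma \ref{lem-xtau}, namely
\[
x(\tau) = \frac{\gamma_\la - \gamma_{\la'}}{p}\,\tau + \frac{1}{\gamma_\la + \gamma_{\la'}}\Big(\ln\frac{C_\la}{C_{\la'}} + h'\gamma_{\la'} - h\gamma_\la\Big) + o(1).
\]
So the first step is to extract from $u_\bla$ quantities that recover $\la,\la',h,h'$.

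First I would suppose for contradiction that $u_\bla = u_\blaa$ with $(\bla)\neq(\blaa)$, both $\la,\la',\bar\la,\bar\la' > 1$. Since $u_\bla \le v_\bla = \min(u_{\la,h}, \hat u_{\la',h'})$ and $Q(\tau) = \int_\R (v_\bla^p - u_\bla^p) \le D_\bla e^{d\tau}$ with $d = (\gamma_\la\gamma_{\la'}+(p-1))/p$, the left tail of $u_\bla$ is controlled by $u_{\la,h}$ and the right tail by $\hat u_{\la',h'}$; in fact, using the $L^1$-closeness and the monotonicity/convexity structure, on the far left $u_\bla(x,\tau) = v_\la(x - \la\tau + h)(1 + o(1))$ as $x \to -\infty$ (for each fixed $\tau$, or along $x - \la\tau \to -\infty$), and symmetrically on the far right. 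The precise exponential rates $e^x$ on the left and $e^{-x}$ on the right (from \eqref{eqn-ula1}) together with the coefficients let one read off, from the $\tau\to-\infty$ behavior of $u_\bla$ itself, first the neck location $x(\tau)$ (e.g. as the unique minimum point of $u_\bla(\cdot,\tau)$, or the point where $u_\bla = m(\tau) - \text{something}$), then the slope parameter $(\gamma_\la - \gamma_{\la'})/p$ from the linear-in-$\tau$ part of $x(\tau)$, and the constant term from the $o(1)$-corrected intercept. One also reads off $d$ from the rate in \eqref{eq-2-sided}. This gives two equations. To separate $\gamma_\la$ from $\gamma_{\la'}$ individually, I would additionally use the one-sided asymptotics: the speed of the left traveling wave front is $\la$ and that of the right is $\la'$, which are recoverable by tracking, say, the level set $\{u_\bla = 1/2\}$ on each side of the neck as $\tau\to-\infty$ — on the left it moves like $\la\tau - h + o(1)$, on the right like $-\la'\tau + h' + o(1)$. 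Hence from $u_\bla$ one recovers $\la, h$ and $\la', h'$, forcing $(\bla) = (\blaa)$, a contradiction.

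The step I expect to be the main obstacle is making rigorous the claim that $u_\bla$, and not merely the supersolution $v_\bla$, actually \emph{exhibits} these asymptotics — i.e. upgrading the integral bound $Q(\tau) \le D_\bla e^{d\tau}$ to pointwise control of $u_\bla$ near the neck and along the two fronts. Near the neck this is essentially done in the proof of Proposition \ref{prop-limit}, which produces a point $x \in [x(\tau)-1, x(\tau)]$ with $u_\bla(x,\tau) \ge 1 - \bar D_\bla e^{d\tau}$; what remains is to promote this to control of the \emph{location} of the neck (e.g. via the monotonicity of $u_\bla$ in $x$ on each side of its minimum, which follows from the corresponding property of $u_{\la,h}, \hat u_{\la',h'}$ for $\tau \ll 0$, since $v_\la$ is monotone) and to the two fronts (where one compares $u_\bla$ from above with $v_\bla$ and from below with a slightly shifted traveling wave, exploiting that $u_\bla$ is squeezed between them in $L^1$ and is spatially monotone). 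Once the neck location and front speeds are pinned down as functions of $\tau$ up to $o(1)$, the algebra recovering $(\bla)$ from $((\gamma_\la - \gamma_{\la'})/p,\ \text{intercept},\ \la,\ \la')$ is routine, using that $\la \mapsto \gamma_\la$ is determined by \eqref{eq-gamma-la}.
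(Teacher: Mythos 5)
Your strategy---recover $\lambda,h$ from the left front of $u_\bla$ itself and $\lambda',h'$ from the right front---is a genuinely different route from the paper's, and it could in principle work, but as written it has a real gap at exactly the step you flag and then do not fill: upgrading the weighted integral bound \eqref{eqn-qtau6} to \emph{pointwise} control of $u_\bla$ along the moving fronts. The bound $\int_\R \hat a\,(v_\bla-u_\bla)\,dx\le D_\bla e^{d\tau}$ carries the weight $\hat a\sim v_\bla^{p-1}$, which degenerates in the tails, so your claim that $u_\bla(x,\tau)=v_\la(x-\la\tau+h)(1+o(1))$ as $x\to-\infty$ simply does not follow from it; and even near the half-level point $x\approx\la\tau-h$ (where the weight is harmless), passing from $L^1$-smallness of $v_\bla-u_\bla$ to locating the level set $\{u_\bla=1/2\}$ within an additive $o(1)$ requires (i) a uniform positive lower bound for $u_\bla$ in the moving frame around $z=0$, so that the quasilinear equation \eqref{eq-U-z} is uniformly parabolic there, and (ii) interior parabolic estimates converting the integral bound into sup bounds. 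The paper only establishes such lower bounds and $L^1\to C^k$ upgrades later, in the proof of Theorem \ref{thm2} (via $R>0$, the mean value inequality \eqref{eq-MVT}, and the tip/middle decomposition), so your proof would have to import or redo that machinery here. In addition, two auxiliary assertions are unproved: spatial unimodality of $u_\bla(\cdot,\tau)$ does not ``follow from the corresponding property of $u_{\la,h},\hat u_{\la',h'}$'' (monotonicity of a barrier does not transfer to the solution; you would need a Sturmian/intersection-count argument or should avoid it), and the intersection point $x(\tau)$ is where $v_\bla$ (and approximately $u_\bla$) attains its \emph{maximum}, not a minimum, so ``the unique minimum point'' is the wrong marker.

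For contrast, the paper never controls $u_\bla$ pointwise. It exploits $u\le v_\bla$, $u\le v_\blaa$ together with \eqref{eqn-qtau5} for both approximations to get $\int_\R|v_\bla^p-v_\blaa^p|\,dx\le Ce^{d\tau}$, i.e.\ it transfers the comparison entirely to the two \emph{explicit} supersolutions; then the rate $d$ and the location of $\max u$ pin down $\gal\gall$ and $\gal-\gall$, an explicit tail computation à la \eqref{eq-int-la} forces $\la=\bar\la$, $\la'=\bar\la'$, and a direct evaluation of $\int v^p-\int\bar v^p\approx\tfrac12|h+h'|$ handles the translates, with the residual case $h'=-h$ killed by translation invariance plus the smooth-tip asymptotics $u\sim C(\tau)e^{x}$. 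Your route, if the pointwise front asymptotics were established, would be conceptually cleaner (it recovers $h$ and $h'$ separately and so avoids the special $h+h'=0$ case), but filling the pointwise gap is the bulk of the work and is not routine.
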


\begin{proof}
We will prove the Proposition in two steps.

\begin{step}
\label{step-la}
Fix $h, h', \bar h, \bar h'$. If $\lambda,\lambda', \bar \lambda, \bar \lambda' >1$ and 
$(\lambda,\lambda') \neq ( \bar \lambda, \bar \lambda')$,  then $u_{\bla} \neq u_{\blaa}$. 
\end{step}
To prove the claim we argue by contradiction. Assume that  $(\lambda,\lambda') \neq ( \bar \lambda, \bar \lambda')$ and  $u_{\bla} \equiv  u_{\blaa}$. 
For simplicity we call this solution $u$.  Without loss of generality we may assume that $\lambda < \bar \lambda$. By \eqref{eq-d} we have 
$$d = \frac{\gal \gall + (p-1)}p =  \frac{\gabl \gabll + (p-1)}p = \bar d$$
implying $\gal \gall = \gabl \gabll$. If  $m(\tau):= \max_{\R} u  (\cdot,\tau)$ then it satisfies \eqref{eq-2-sided}.
Let $x_{\max}  (\tau) $ be a point such that $m(\tau)= u (x_{\max}(\tau),\tau)$. 
It  $v, \bar v$  are the approximating solutions corresponding to $u_\bla, u_{\blaa}$ respectively, then we have 
$u=u_{\bla} \leq v$ and $u=u_{\blaa} \leq \bar  v$, which  combined with \eqref{eq-2-sided} gives that
\be\label{eqn-v25}
v(x_{\max}(\tau),\tau) \geq 1-C_1\, e^{d\tau} \quad \mbox{and} \quad \bar  v(x_{\max}(\tau),\tau) \geq 1-C_1\, e^{d\tau}.
\ee
In addition, if $x(\tau)$, $\bar x(\tau)$ denote the maximum points of $v(\cdot,\tau)$, $\bar v(\cdot,\tau)$ respectively, we have 
\begin{equation}
\label{eqn-v26}
v(x_{\max} (\tau),\tau) \leq v(x(\tau),\tau) \leq  1- C_2\, e^{d\tau}  \quad \mbox {and} \quad 
\bar v(x_{\max} (\tau),\tau) \leq  \bar v(\bar x(\tau),\tau)  \leq 1- C_2\, e^{d\tau}.
\end{equation}
Using the asymptotics \eqref{eqn-vla} and the estimates \eqref{eqn-v25}, \eqref{eqn-v26}  we conclude that 
$$ e^{-\gamma_\lambda (x_{\max}(\tau) -\la \tau )} \approx  e^{-\gabl  (-x_{\max}(\tau)  -\bar \la \tau )},$$
yielding
\be\label{eqn-xtau11}
x_{\max}(\tau) = \frac{\gamma_\lambda - \gamma_{\lambda'} + (p - 1)}{p}\, \tau + O(1) = 
\frac{\gamma_{\bar{\lambda}} - \gamma_{\bar{\lambda'}} + (p - 1)}{p}\, \tau + O(1).\
\ee
This in particular implies that $\gamma_\lambda - \gamma_{\lambda'} = \gamma_{\bar{\lambda}} - \gamma_{\bar{\lambda'}}$. 
In addition, by \eqref{eqn-xtau11} and   \eqref{eq-intersection}  we have  
$$x_{\max}(\tau)  \approx x(\tau) + O(1) = \bar{x}(\tau) + O(1).$$
On the other hand,   by  \eqref{eqn-qtau5} we have 
\[\int_{\mathbb{R}} (v^p - u^p) \, dx \le C_1 e^{d\tau}\]
and
\[\int_{\mathbb{R}} (\bar{v}^p - u^p)\, dx \le C_2 e^{d\tau}.\]
Recalling that $u \leq v$ and $u \leq \bar v$ we conclude that 
\[\int_{\mathbb{R}} |v^p - \bar{v}^p|\, dx \le C \,  e^{d\tau}.\]
Since $x(\tau)$ and $\bar{x}(\tau)$ are comparable for $\tau << -1$, without a loss of any generality we may assume $x(\tau) \le \bar{x}(\tau)$. 
Then we have
\[\int_{-\infty}^{x(\tau)} |v_{\lambda}^p(x + h - \lambda\tau) - v_{\bar{\lambda}}^p(x + \bar{h} - \bar{\lambda}\tau)|\, dx \le C e^{d\tau}\]
implying
\[\int_{-\infty}^{x(\tau) + h - \lambda\tau} |v_{\lambda}^p(y) - v_{\bar{\lambda}}^p(y + (\lambda - \bar{\lambda})\tau + \bar{h} - h)|\, dy \le C\, e^{d\tau}.\]
Using the asymptotics \eqref{eqn-vla} and that $(\lambda - \bar{\lambda}) \, \tau >> 1$ (since $\la < \bar \la$), the previous inequality gives 
\begin{equation}
\label{eq-int-la}
\int_M ^{2M} |e^{-\gamma_\lambda y} - e^{-\gamma_{\bar{\lambda}} (y + \bar{h} - h + (\lambda - \bar{\lambda})\tau)}| \, dy \le C e^{d\tau},
\end{equation}
for a big constant $M >> 1$. Estimate \eqref{eq-int-la} holding for any $\tau << -1$ forces $\lambda = \bar{\lambda}$, which concludes the proof of Step \ref{step-la}.

\begin{step}
\label{step-h}
Fix now $\lambda, \lambda' > 1$. If   $h, h', \bar h, \bar h'$ satisfy $(h,h') \neq (\bar h, \bar h')$,  then $u_{\bla} \neq u_{\blaaa}$. 
\end{step}

To prove Step \ref{step-h} we argue by contradiction. Assume  that $(h,h') \neq (\bar h, \bar h')$ and $u:= u_{\bla} = u_{\blaaa}$. 
By translating $v_\lambda, v_{\lambda'}$  by $\bar h, \bar h'$ respectively,  we may assume that $\bar h = \bar h'=0$ 
(our proof is not using the exact choice of $v_\lambda, v_{\lambda'}$ so that $v_\lambda(0)= v_{\lambda'}(0)=1/2$). 
Let $v$ be the approximation of $u:=u_\bla$   given by $v := \min (v_{\lambda}(x + h - \lambda\tau), v_{\lambda'}(-x +h' - \lambda'\tau))$. 
We observe that
$$Q(\tau):=  \int_{\R} a \, (v-u) (\cdot,\tau)  \, dx =  \int_{\R} v^p (\cdot,\tau) \, dx -  \int_{\R} u^p(\cdot,\tau) \, dx.$$
Hence, the bounds $u \leq v$ and \eqref{eqn-qtau6} yield
$$\big | \int_{\R} v^p(\cdot,\tau)  \, dx  - \int_{\R} u^p(\cdot,\tau)  \, dx \big | \leq  C_\bla \, e^{d \tau},$$
where $d$ is given by \eqref{eq-d} and depends only on $\lambda, \lambda'$.  
Similarly, if   $\bar v := \min \, \big ( v_{\lambda}( x  - \lambda \, \tau), v_{\lambda'}(-x - \lambda' \, \tau ) \big )$
is the approximation of   $\bar u:=u_\blaaa$ with $\bar h = \bar h'=0$, then
$$\big | \int_{\R} \bar v^p (\cdot,\tau) \, dx  - \int_{\R} u^p (\cdot,\tau) \, dx \big | \leq  C_\blaaa\,  e^{d\, \tau}.$$
We conclude that
\be\label{eqn-v66}
\big | \int_{\R} \bar v^p (\cdot,\tau) \, dx  - \int_{\R} \bar v^p (\cdot,\tau) \, dx \big | \leq  C \,   e^{d\, \tau}
\ee 
We will now show that if $(h,h') \neq (0,0)$, then \eqref{eqn-v66} cannot hold leading to a contradiction. 
To this end, denote by $x(\tau)$ the intersection point between $v_{\lambda}(x + h -\lambda\tau)$ and $v_{\lambda'}(-x+h-\lambda'\tau)$
and by  $\bar{x}(\tau)$ the intersection point between $v_{\lambda}(x -\lambda\tau)$ and $v_{\lambda'}(- x  - \lambda'\tau )$.
We have
$$
\int_{\R}  v^p(\cdot,\tau)  \, dx = \int_{-\infty}^{x(\tau) +  h} v_{\lambda}^p(x  - \lambda\tau)\, dx + \int_{x(\tau)-  h'}^{+\infty} v_{\lambda'}^p(- x - \lambda'\tau)\, dy
$$
and similarly
$$
\int_{\R} \bar v^p(\cdot,\tau)  \, dx = \int_{-\infty}^{\bar{x}(\tau) } v_{\lambda}^p(x  - \lambda\tau)\, dx + \int_{\bar{x}(\tau)}^{+\infty} v_{\lambda'}^p(- x - \lambda'\tau)\, dx
$$
Hence,
$$
\int_{\R}  \bar  v^p(\cdot,\tau)  \, dx - \int_{\R}  v^p(\cdot,\tau)  \, dx =  \int_{x(\tau) +  h}^{\bar x(\tau)}  v_{\lambda}^p(x  - \lambda\tau)\, dx 
+ \int^{x(\tau)-  h'}_{\bar{x}(\tau)} v_{\lambda'}^p(- x - \lambda'\tau)\, dx. 
$$
By \eqref{eq-intersection} we have
$$
\bar{x}(\tau) - x(\tau) =  \frac{ h \gamma_{\lambda} - h' \gamma_\lambda'}{\gamma_{\lambda} + \gamma_\lambda'} + o(1)$$
which implies 
\begin{equation}
\label{eq-diff-inter}
\bar{x}(\tau) - x(\tau)  =
h - \frac{\gamma_{\lambda'}\, (h'+h)}{\gamma_{\lambda} + \gamma_\lambda'} + o(1) = -h'  + \frac{\gamma_{\lambda}\, (h'+h)}{\gamma_{\lambda} + \gamma_\lambda'} + o(1)
\end{equation}
Setting $\mu:=  \frac{\gamma_{\lambda}}{\gamma_{\lambda} + \gamma_\lambda'} $, $\mu':=  \frac{\gamma_{\lambda'}}{\gamma_{\lambda} + \gamma_\lambda'} $ and combining the above yields
\be\label{eqn-v77}
\begin{split}
\int_{\R}  \bar  v^p(\cdot,\tau)  \, dx &- \int_{\R}  v^p(\cdot,\tau)  \, dx \\&=  \int_{x(\tau) +  h}^{x(\tau)+h-\mu'(h+h')}  v_{\lambda}^p(x  - \lambda\tau)\, dx 
+ \int^{x(\tau)- h'}_{x(\tau) - h' + \mu (h+h')} v_{\lambda'}^p(- x - \lambda'\tau)\, dx  + o(1)
\end{split}
\ee
For  $h$, $h'$ and $\tau <<0$ (depending on $h$, $h'$) we have 
$v_{\lambda}(x-\lambda \tau)  \geq 1/2$ 
and $v_{\lambda'}(-x-\lambda'\tau)  \ge 1/2$ on the intervals over which those  functions are integrated in \eqref{eqn-v77}. In addition,
both integrals on the right hand side of \eqref{eqn-v77} have the same sign. Hence, 
$$\big | \int_{\R}  \bar  v^p(\cdot,\tau)  \, dx  - \int_{\R}  v^p(\cdot,\tau)  \, dx   \big |  \geq \frac 12 \, (\mu + \mu') \, (h+h') +o(1)= \frac 12 \, |h+h'| + o(1)$$
If $h+h'\neq 0$ this contradicts \eqref{eqn-v66} and concludes the proof of the Lemma. If $h'=-h$ then $v(x,\tau) = \bar v (x+h,\tau)$ for all $\tau$,
which means that the solutions $u_m, \bar u_m$ of \eqref{eqn-um} with initial data $v(\cdot,-m), \bar v(\cdot, -m)$ respectively satisfy 
$u_m(x,\tau) =\bar u_m(x+h,\tau)$ for all $\tau > -m$, hence the same will hold for the limits $u, \bar u$. Since $u=\bar u$, this means that
$u(x,\tau) =u(x+h,\tau)$ for any $x \in \R$. 
On the other hand, the fact that 
the solution $u$ defines  a metric that can be lifted to a smooth metric  on $S^n$ implies that $u(x, \tau) = C(\tau) \, e^{x} (1 + o(1))$, as $x \to -\infty$ with  $C(\tau) >0$,
hence $u(x,\tau) =u(x+h,\tau)$ must imply that $h=0$ which means that $(h,h')=(0,0)$ and contradicts our assumption. The proof of Step \ref{step-h} is now complete. 
\end{proof}

\begin{proof}[Proof of Theorem \ref{thm1}] The proof of the theorem is a direct consequence of Propositions \ref{prop-limit} and \ref{prop-distinguish}. 
\end{proof} 

\section{The geometry  of merging traveling waves} 
\label{sec-geom}

In this last section we derive the geometric properties of the ancient solution $u_\bla$ of the equation \eqref{eqn-v} on $\R \times (-\infty, T_\bla)$, as constructed 
in Theorem \ref{thm1}.  The one parameter family of metrics    $g_\bla(\tau):= u_\bla^\mmm(\cdot, \tau) \, g_{_{cyl}}$   can be lifted to a smooth one parameter family of metrics on $S^n \times (-\infty, T_\bla)$ which defines an ancient  rotationally symmetric  solution  of the  {\em rescaled Yamabe flow}  on $S^n$, equation 
\begin{equation}
\label{eq-RYF}
\frac{\partial}{\partial \tau} g = -(R - 1) \, g.
\end{equation}

\smallskip
We next prove the following result concerning the behavior of the Riemannian curvature  of the metric $g_\bla(\tau)$ near $\tau=  -\infty$. 

\begin{thm}\label{thm2}
The solution  $g_\bla(\tau):=u_\bla^\mmm(\cdot, \tau) \, g_{_{cyl}}$ defines a type I ancient solution to the Yamabe flow in the sense that the norm of its curvature operator  is uniformly bounded,  that is for any $\tau_0 < T_\bla$, we have 
$\|\,  { \mbox{\em Rm}} \, (g_\bla)\| \le C$  for all $ \tau \in (-\infty, \tau_0)$. 
\end{thm}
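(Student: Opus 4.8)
The plan is to reduce the curvature bound to a pointwise bound on $u_\bla$ and its first two spatial derivatives in cylindrical coordinates. For a rotationally symmetric metric $g = u^{4/(n-2)}\, g_{cyl}$ on $\R\times S^{n-1}$, the full Riemannian curvature of $g$ can be written in terms of $u$, $u_x$, $u_{xx}$ and lower order terms, with the schematic form $|\mbox{Rm}(g)| \le C\, u^{-\frac{4}{n-2}}\big( u^{-2}(u_x^2 + 1) + u^{-1}|u_{xx}| + 1\big)$ (this is where the scalar curvature $R$ of $g$ enters, via $R = u^{-p}(u^p + \dots)$; one should record the exact expression from the fast diffusion / Yamabe flow literature, e.g. via $R-1 = u^{-p}(u_{xx} - u + u^p)$ together with the sectional curvature formulas on the cylinder). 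So it suffices to prove: for every $\tau_0 < T_\bla$ there is $C$ with
\[
u_\bla(x,\tau) \ \ge\ c\,\min(e^{x},e^{-x}),\qquad |(u_\bla)_x|\le C\,u_\bla,\qquad |(u_\bla)_{xx}|\le C\,u_\bla
\]
uniformly for $\tau < \tau_0$ (the lower bound guarantees the negative powers of $u_\bla$ appearing in $|\mbox{Rm}|$ are controlled after multiplication by the prefactor $u_\bla^{-4/(n-2)}$ coming from the conformal factor — more precisely one checks that the combination which appears is scale-invariant in $u_\bla$ so only ratios $u_x/u$, $u_{xx}/u$ matter). The point of "type I" is then that this $C$ does not depend on $\tau$ as $\tau\to-\infty$.

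The key steps, in order. First, upper barrier: from Theorem \ref{thm1} we already have $0 < u_\bla \le v_\bla \le \min(u_{\la,h},\hat u_{\la',h'})$, and each traveling wave $v_\la$ is a fixed smooth profile, so $v_\bla(\cdot,\tau)$ and its derivatives in $x$ are controlled by a $\tau$-independent constant times $\min(e^x,e^{-x})$ in the tails and by $1$ in the middle; in particular $u_\bla\le 1$ everywhere. Second, lower barrier: one must produce a subsolution of \eqref{eqn-v} that from below matches the $O(e^{\pm x})$ decay and is bounded below near the intersection region. The estimate \eqref{eq-2-sided}, $m(\tau)\ge 1 - \bar D_\bla e^{d\tau}$, together with the monotonicity of $u_\bla$ in $\tau$ (Proposition \ref{prop-limit}) and a localized comparison argument — comparing $u_\bla$ with a suitably translated steady state $v_0$ from \eqref{eqn-w1}, or with $c\min(u_{\la,h},\hat u_{\la',h'})$ for small $c$ on a half-line — should give $u_\bla(x,\tau)\ge c(\tau_0)\,\min(e^x,e^{-x})$ for $\tau\le\tau_0$ with $c$ depending only on $\tau_0$, $n$, $\bla$ but not on $\tau\to-\infty$; the uniformity as $\tau\to-\infty$ is exactly where one uses that $v_\bla$ converges (in the appropriate sense) to the two fixed traveling waves, so the "shape" stabilizes. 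Third, derivative estimates: with $c\min(e^x,e^{-x}) \le u_\bla \le C\min(e^x,e^{-x})$ in hand, apply interior parabolic regularity for the quasilinear equation \eqref{eqn-v} — which, away from the degeneracy $u=0$ is uniformly parabolic — on unit parabolic cylinders, after the rescaling $u_\bla = a(x)\, w$ with $a(x)=\min(e^x,e^{-x})$ to turn the degenerate equation into one with bounded non-degenerate coefficients and bounded solution $w\in[c,C]$; Schauder / $W^{2,1}_p$ estimates then give $|w_x|,|w_{xx}|\le C$, i.e. $|(u_\bla)_x|\le Cu_\bla$ and $|(u_\bla)_{xx}|\le Cu_\bla$, with constants independent of $\tau$ because the rescaled equation's coefficients are. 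Finally, insert these into the curvature formula to conclude $\|\mbox{Rm}(g_\bla)\|\le C$ on $(-\infty,\tau_0)$, which by the time-rescaling relating \eqref{eqn-v} to \eqref{eq-YF} (the factor $(T-t)^{-1}$) translates into $|t|\max|\mbox{Rm}|$ bounded, i.e. type I in the sense of Definition \ref{defn-ancient}.

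I expect the main obstacle to be the uniform-in-$\tau$ lower bound on $u_\bla$ near the intersection point $x(\tau)$, and more generally in the whole "core" region. Away from the core, $u_\bla$ is trapped between two fixed profiles decaying like $e^{\pm x}$, so the ratio estimates are easy; but the core drifts ($x(\tau)\sim \frac{\gamma_\la-\gamma_{\la'}}{p}\tau$) and one needs that $u_\bla$ stays close to $1$ there uniformly as $\tau\to-\infty$. The integral estimate \eqref{eqn-qtau6}, $\int_\R \hat a\,(v-u)\,dx \le D_\bla e^{d\tau}$, gives closeness in an averaged sense; upgrading this to a pointwise lower bound on the whole core — not just at one point in $[x(\tau)-1,x(\tau)]$ as extracted in the proof of Proposition \ref{prop-limit} — will require either a Harnack inequality for $u^p_\tau = u_{xx}-u+u^p$ applied to $1-u$ (which satisfies a linear-type inequality since $v^p-u^p \approx p(1-u)$ when $u$ is near $1$), or a barrier argument propagating the single-point lower bound along the core using the monotonicity $\,(u_\bla)_\tau\le 0\,$ and the fact that $u_{\la,h},\hat u_{\la',h'}$ are themselves close to $1$ on an interval of length growing linearly in $|\tau|$. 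Getting the constants in this step genuinely independent of $\tau$ is the crux of the "type I" assertion.
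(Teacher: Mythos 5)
Your overall reduction is the same as the paper's (pointwise upper and lower bounds on the conformal factor, then uniform parabolicity and standard parabolic estimates give bounds on derivatives, hence on $\|\mbox{Rm}\|$), and you correctly locate the crux: a positive pointwise lower bound on $u_\bla$, uniform as $\tau\to-\infty$, both in the drifting core near $x(\tau)$ and in the tails. But the proposal does not actually prove this lower bound: you only list candidate strategies ("comparison with a translated steady state", "$c\min(u_{\la,h},\hat u_{\la',h'})$ for small $c$", "a Harnack inequality \dots or a barrier argument") and explicitly defer the uniformity-in-$\tau$ issue, which is the whole content of the theorem. Moreover the specific barriers you suggest do not work as stated: $\min(u_{\la,h},\hat u_{\la',h'})$ is a \emph{supersolution} of \eqref{eqn-v}, not a subsolution, and multiplying a traveling wave by a constant $c<1$ fails to produce a subsolution, since for the traveling waves $u_{xx}-u=(u^p)_\tau-u^p<0$, so the required inequality $c^p(u_{xx}-u)\le c(u_{xx}-u)$ goes the wrong way when $c^p<c$. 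Also the assertion that in the tails "$u_\bla$ is trapped between two fixed profiles" is unjustified: Theorem \ref{thm1} only gives $0<u_\bla\le v_\bla$, so a lower bound of the same exponential rate in the tails is precisely part of what has to be established, not a starting point.

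The paper closes exactly this gap by exploiting the integral estimate \eqref{eqn-qtau5} in the frame of one traveling wave ($z=x-\la\tau+h$, $U(z,\tau)=u(z+\la\tau-h)$), splitting into two regions. In the tip region it passes to polar coordinates $\hat u = U\,|y|^{-2/(p-1)}$, $|y|=e^{\frac{p-1}{2}z}$, and uses that the solution has $R>0$ (a consequence of $(u_m)_\tau\le 0$), i.e. $\Delta_{\R^n}\hat u\le 0$; the resulting mean value inequality converts the $L^1$ closeness of $\hat u^p$ to $\hat v_\la^p$ into the pointwise bound $c\le\hat u\le C$ on $|y|\le 2M$, which is exactly the missing tail lower bound, and then \eqref{eq-u-polar} is uniformly parabolic there, giving the derivative bounds. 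In the core region, where $v_\la\ge 1/2$, the coefficient $a\ge v_\la^{p-1}\ge 2^{1-p}$ makes the equation for $w=v_\la-U$ uniformly parabolic with bounded coefficients, and parabolic estimates upgrade $\int w\,dz\le Ce^{d\tau}$ to $\|w\|_{C^k}\le C_k e^{d\tau}$, hence $U\ge v_\la-Ce^{d\tau}\ge c>0$ uniformly along the whole (drifting) core, with the region $x\ge x(\tau)$ handled by symmetry. Your "Harnack for $1-u$" remark is close in spirit to this second step, but without the traveling-wave frame, the $L^1\to C^k$ upgrade via uniform parabolicity, and the $R>0$/mean-value mechanism at the tips (where the equation degenerates as $u\to 0$), the argument as written does not close.
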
  

\smallskip 

\bremark{ The statement of Theorem \ref{thm2}  exactly means that the unrescaled flow \eqref{eq-YF}, whose scaling by $|t|$ yields to the equation 
\eqref{eq-RYF}, is a type I ancient solution according to the Definition \ref{defn-ancient}}. 

\eremark

\begin{proof}
Since our metric is conformally flat, the norm of its curvature operator  $\|\mbox{\text  \rem}\|$, can be expressed in terms of the powers (positive or negative) of the conformal factor, its first and second order derivatives. On the other hand, the conformal factor satisfies the equation of type \eqref{eqn-v} in the considered parametrization. Therefore, we see that if we have  uniform upper and lower bounds on the conformal factor, by standard parabolic estimates we get uniform bounds on all its derivatives and therefore the uniform bound on $\|\rem\|$.

Estimate \eqref{eqn-qtau5} will be  crucial in proving this  theorem, that is we have 
\[\int_{\mathbb{R}} (v_\bla^p - u_\bla^p )\, dx \le C e^{d\tau}.\]
Denote shortly by $v := v_\bla$ and by $u := u_\bla$. Since $u \le v$, the previous estimate and the definition of $v:=
\min (u_{\la,h}, u_{\la',h'})$  imply  the bound 
\[\int_{-\infty}^{x(\tau)} (u_{\la,h}^p - u^p)\, dx \le C\, e^{d\tau}\]
where $u_{\la,h} = v_\la(x - \la\tau + h)$ is a traveling wave coming in from the left and  $x(\tau)$ given by \eqref{eq-intersection} denotes the point where
the two traveling waves $u_{\la,h}$ and $u_{\la',h'}$ intersect. 
Let $z = x - \la\tau + h$ and denote by $U(z,\tau) := u(z+\la\tau-h)$. If we perform this change of variables in the previous integral estimate we obtain,
\begin{equation}
\label{eq-vpup}
\int_{-\infty}^{\bar{x}(\tau)} (v_\la(z)^p - U(z,\tau)^p)\, dz \le C\, e^{d\tau}
\end{equation}
where $\bar{x}(\tau) := x(\tau) - \la\tau + h$ and $U(z,\tau)$ satisfies the equation
\begin{equation}
\label{eq-U-z}
U_{\tau} = U_{zz} + \la\, (U^p)_z - U + U^p.
\end{equation}
We will obtain derivative estimates   which hold for 
\be\label{eqn-bs}
- \infty < z < \bar x(\tau) + \frac 12
\ee
since similar estimates may be obtained in the region $\bar x(\tau) - \frac 12 < z < + \infty$ from the symmetry of our problem.

We go now from cylindrical to polar coordinates via the following coordinate change,
\begin{equation}
\label{eq-coord-change}
U(z,\tau) = \hat{u}(y,\tau)\, |y|^{\frac{2}{p-1}}, \qquad r = |y| = e^{\frac{p-1}{2}\, z}
\end{equation}
where $\hat{u}(y,\tau)$ satisfies the equation 
\begin{equation}
\label{eq-u-polar}
(\hat{u}^p)_{\tau} = \alpha \, \Delta \hat u + \beta\,  r \, \hat{u}_r + \gamma \, \hat{u},
\end{equation}
for some  constants $\alpha>0$ and $\beta, \gamma$. Furthermore, the ancient solution $g_\bla = \hat{u}^{\frac{4}{n-2}}\, g_{\mathbb{R}^n}$ has positive scalar curvature $R>0$, which is equivalent to 
\[\Delta_{\mathbb{R}^n} \hat{u} \le 0.\]
By the Mean value theorem we have
\begin{equation}
\label{eq-MVT}
\hat{u}(y_0,\tau) \ge C_n\, \int_{B(y_0,1)} \hat{u}(y,\tau)\, dy
\end{equation}
for all $y_0 \in \R^n$. 
Assume first that $|y| \le 2M$ for a fixed number $M$. Then, $u \le v$ implies
\begin{equation}
\label{eq-upp-bound}
\hat{u} \le \hat{v} = v_\la(z)\, |y|^{-\frac{2}{p-1}} \le C\, \frac{\min\{1, e^z\}}{|y|^{\frac{2}{p-1}}}  = C\, \frac{\min\{1, |y|^{\frac{2}{p-1}}\}}{|y|^{\frac{2}{p-1}}} \le C.
\end{equation}
Here we have used the estimate $v_\la (z)  \leq \min(1, e^z)$ which follows from the bounds $v_\la\leq 1$ and \eqref{eqn-vla}. 
Since $p >1$, \eqref{eq-MVT} and \eqref{eq-upp-bound}  imply
\begin{equation}
\label{eq-MVTp}
\hat{u}(y_0,\tau) \ge C\, \int_{B(y_0,1)} \hat{u}(y,\tau)^p\, dy.
\end{equation}
On the other hand, after the coordinate change \eqref{eq-coord-change}, estimate \eqref{eq-vpup} becomes
\[\int_{B(0,e^{\frac{p-1}{2}\, \bar{x}(\tau)})} \frac{(\hat{v}_\la^p - \hat{u}^p)}{|y|^{\frac n2 -1}}\, dy \le C e^{d\tau},\]
where $B\left(0,e^{\frac{p-1}{2}\, \bar{x}(\tau)}\right)$ is the euclidean ball in $\mathbb{R}^n$ of radius 
$e^{\frac{p-1}{2}\, \bar{x}(\tau)}$. Note that for $|y_0| \le 2M$, and $\tau < < -1$ sufficiently small so that $e^{\frac{p-1}{2}\, \bar{x}(\tau)} >> 1$, the previous estimate yields
\[c \, \int_{B(y_0,1)} (\hat{v}_\la^p - \hat{u}^p)\, dy \le \int_{B(0,e^{\frac{p-1}{2}\, \bar{x}(\tau)})} \frac{(\hat{v}_\la^p - \hat{u}^p)}{|y|^{\frac n2 - 1}}\, dy \le C\, e^{d\tau},\]
where $c = c(M)$ is a constant uniform in time. Hence,
\begin{equation}
\label{eq-comp-upvp}
\int_{B(y_0,1)} \hat{u}(y,\tau)^p\, dy \ge \int_{B(y_0,1)} \hat{v}_\la(y)^p \, dy - C e^{d\tau}, \qquad |y_0| \le 2M\end{equation}
where $C = C(M)$. Combining \eqref{eq-MVTp} and \eqref{eq-comp-upvp} yields
\[\hat{u}(y_0,\tau) \ge C\, \int_{B(y_0,1)} \hat{v}_\la(y)^p\, dy - C\, e^{d\tau} \ge c > 0\]
for $\tau \le \tau_0$ sufficiently small and all $|y| \le 2M$. This together with \eqref{eq-upp-bound} imply
\begin{equation}
\label{eq-nondeg}
c \le \hat{u}(y,\tau) \le C, \qquad \tau \le \tau_0, \qquad |y| \le 2M.
\end{equation}
Having \eqref{eq-nondeg}, equation \eqref{eq-u-polar} is a uniformly parabolic equation for $(y,\tau) \in B(0,2M)\times (-\infty,\tau_0)$, so standard parabolic estimates applied to equation \eqref{eq-u-polar} imply we have all uniform bounds on the derivatives of $\hat{u}$ in the region $B(0, \frac {3M}2) \times 
(-\infty, -2 \, \tau_0)$. Since $\hat{u}^{\frac{4}{n-2}}$ is the conformal factor of our metric $g_\bla$ in polar coordinates, by the discussion at the beginning of the proof we have
\[\| \,  { \mbox{\text  Rm}} \, (y,\tau)\| \le C, \qquad \tau \le \tau_0, \qquad |y| \le M\]
for a uniform constant $C$. Equivalently, in $z$ coordinates this means
\begin{equation}
\label{eq-curv-tip}
\| \,  { \mbox{\text Rm}} \, (z,\tau)\| \le C, \qquad \tau \le \tau_0, \qquad z \le \frac 2{p-1} \, \log M.
\end{equation}
Observe this estimate implies that we have  the curvature uniformly bounded in the tip region of our ancient solution.

Let us now focus on the inner part of our solution that turns out to have the asymptotics of a cylindrical metric. 
More precisely, we will assume now that $|y| \geq M/2$ which according to \eqref{eq-coord-change} means $$ z \ge \frac 2{p-1} \log \frac M2$$
and also that $z \leq \bar x(\tau) + 1$, since  we are interested in  deriving estimates in the region \eqref{eqn-bs}. 
 Recall that the estimate \eqref{eqn-qtau5} can be rewritten as
\[\int_{\mathbb{R}} \hat{a}\, (v - u)\, dx \le C\, e^{d\tau},\]
where $\hat{a} = p\, \int_0^1 (s v + (1-s) u)^{p-1}\, ds$. This implies
\[\int_{\la\tau - h + \frac 2{p-1} \,\log \frac M2}^{x(\tau) + 1} \hat{a}\, (v_\la(x-\la\tau+h) - u(x,\tau))\, dx \le C\, e^{d\tau}.\]
Let  $z = x -\la\tau + h$ be the coordinate change in the previous integral. Then,
\begin{equation}
\label{eq-middle}
\int_{\frac 2{p-1} \,\log \frac M2}^{\bar x(\tau) +1} a(z,\tau)\, (v_\la(z) - U(z,\tau))\, dz \le C\, e^{d\tau},
\end{equation}
where $a(z,\tau) := \hat{a}(z+\la \tau -h,\tau)$. Note that $a(z,\tau) \ge v_\la(z)^{p-1}$ and  we may choose $M \geq 2$ so that $\log M/2 \geq 0$,
hence 
\begin{equation}
\label{eq-a}
1 \ge a(z,\tau)^{\frac 1{p-1}}  \ge v_\la(z) \ge \frac 12, \qquad z \in (\frac 2{p-1} \,\log \frac M2, \bar  x(\tau))
\end{equation}
since $v_\la(z)$ increases in $z$ and $v_\la(0) = \frac 12$ by our normalization. Set $w(z,\tau) := v_\la(z) - U(z,\tau)$. Then $w \leq v_\la \leq 1$.
Hence, \eqref{eq-middle}  and \eqref{eq-a} imply that for any $q >1$, 
\begin{equation}
\label{eq-est-w}
\int_{\frac 2{p-1} \,\log \frac M2}^{\bar{x}(\tau)+1} w(z,\tau)\, dz \le C e^{d\tau}. 
\end{equation}
On the other hand, since  both  $U(z,\tau)$ and $v_\la(z)$ satisfy equation \eqref{eq-U-z} we get that $w(z,\tau)$ satisfies
\[(a\,w)_\tau = w_{zz} +\la w_z - w + w\, a\]
and by  \eqref{eq-a} the  equation is uniformly parabolic. Hence, standard parabolic estimates applied to it and estimate \eqref{eq-est-w} yield 
the $C^k$ bound 
\[\|w\|_{C^k(U_\tau)} \le C_k e^{d\tau},\]
where $U_\tau = [ \frac 2{p-1} \log 3M/4, \bar{x}(\tau)) \times (-\infty, 2\tau_0)$ and $\tau_0 << -1$ is sufficiently small. 
In particular, in the considered region, we have
\[U(z,\tau) \ge v_\la(z) - C\, e^{d\tau} \ge c > 0, \qquad \mbox{in} \qquad U_\tau.\]
This implies the bound 
\begin{equation}
\label{eq-curv-cylind}
\| \,  { \mbox{\text Rm}} (z,\tau)\| \le C, \qquad \tau \le 2\tau_0, \qquad  \frac 2{p-1} \log M \leq z \leq  \bar{x}(\tau).
\end{equation}

Finally, estimates \eqref{eq-curv-tip} and \eqref{eq-curv-cylind} yield a desired uniform bound on $\| {\text Rm} \|$ for our ancient solution $g_\bla$ for all $\tau \le 2\tau_0$ and all $x \le x(\tau)$. Recall that for $x \ge x(\tau)$ we get the uniform curvature bound using the same analysis as above (the only difference is that this time we need to consider the soliton that is coming in from the right). This finishes the proof of the Theorem.
\end{proof}

\begin{remark}\label{prop-sectional} {\em Let $u_\bla$ be the ancient solution to \eqref{eqn-v} as in Theorem \ref{thm1}. 
Then, we  will next observe that   the metric $g_\bla:= (u_\bla)^\mmm \, g_{cyl}$ has nonnegative  Ricci curvature. 
Indeed, recall that $u_\bla = \lim_{m \to +\infty} u_m$, where $u_m$ is the solution of the initial value problem \eqref{eqn-um}.
It is  sufficient to see  that each $u_m$ has nonnegative  Ricci curvature, since  then we can pass to the limit $m \to +\infty$.
Indeed,  we have seen that  the convergence of $\{ u_m \}$
to $u_\bla$  is uniform on compact subsets of $\R \times (\infty, +\infty)$ and  that 
$u_\bla >0$ on $\R \times (-\infty,T_\bla)$, where  $T_\bla$ is  uniform in $m$. Standard regularity arguments on the quasilinear  equation
\eqref{eqn-v} imply that the convergence is $C^\infty$ on compact subsets of $\R \times (\infty, T_\bla)$, from which our claim readily follows. 

We will next observe how one may show  that each solution $g_m(\cdot, \tau) := u_m(\cdot, \tau)\, g_{cyl}$ of \eqref{eqn-um} has nonnegative Ricci curvature.  
The initial data of $u_m$ at $\tau = -m$  is $v_\bla(\cdot, -m)$. 
We recall that for every  $\tau$, we have defined $v_\bla$ by \eqref{eqn-va2},  namely  
$v_{\la, \la', h, h'}(\cdot, \tau) =  \min \big ( v_\la(x- \la \tau  +h), v_{\la'}(-x- \la' \tau+h')  \big )$ where $v_\la$, $v_{\la'}$ are
traveling wave solutions of equation \eqref{eqn-v2}. It has been shown in \cite{DKS} (Section 4, Proposition 4.5) that both metrics defined via conformal factors $v_\la$, $v_{\la'}$ respectively, have
nonnegative  sectional curvatures. 

Moreover, it  has been observed  in \cite{DKS}  (Section 4) that for  a given smooth and rotationally symmetric metric $g:= v(x)   \, g_{cyl} $ where 
 $g_{cyl} := dx^2 + g_{S^{n-1}}$,  nonnegative   sectional curvatures  is equivalent  to having 
\be\label{eqn-sec1}
 v_x^2 - v\, v_{xx} \geq 0 \qquad \mbox{and} \qquad 4 v^2 - v_x^2 \geq  0.
 \ee 
Since each  for each $\tau \in \R$, the functions $v_\la(\cdot,\tau), v_{\la'}(\cdot,\tau)$ satisfy \eqref{eqn-sec1} (up to the dilation performed 
in \eqref{eqn-cv2}),  the minimum $v_\bla(\cdot,\tau)$
also satisfies \eqref{eqn-sec1} (up to the same dilation) in the distributional sense and it is smooth on $\R \setminus \{ x(\tau) \}$,
where $x(\tau)$ denotes the point at which $v_\la(\cdot,\tau)$ and $v_{\la'}(\cdot,\tau)$  intersect. 
One can then show that there is an approximation $\{ v_\bla^\delta (\cdot,\tau)\}$, $\delta \in (0,\delta_0)$  of 
$v_\bla(\cdot, \tau)$ each satisfying \eqref{eqn-sec1} and such that $ v_\bla^\delta  (\cdot,\tau) \to v_\bla (\cdot,\tau)$, as $\delta \to 0$ uniformly on
compact subsets in $\R$ and also in $C^\infty$ on compact subsets of $\R \setminus \{ x(\tau) \}$. 

Let $u_m^\delta$ be the solution to \eqref{eqn-um} with initial data $v_\bla^\delta(\cdot, -m)$ instead of $v_\bla(\cdot, -m)$.
Since $g_m^\delta(\cdot, -m) := (v_\bla^\delta)^\mmm(\cdot, -m)\, g_{cyl}$ has nonnegative  sectional curvatures, it also has nonnegative
Ricci curvature and this is preserved by the Yamabe flow. It follows that  $g_m^\delta(\cdot, \tau) := (u_m^\delta)^\mmm(\cdot, \tau)\, g_{cyl}$
has nonnegative curvature and passing to the limit $\delta \to 0$, the same holds for $g_m(\cdot, \tau) := (u_m)^\mmm(\cdot, \tau)\, g_{cyl}$.
This sketches the proof of the claim about our solutions having nonnegative Ricci curvature.}
\end{remark}

\medskip

\begin{proof}[Proof of Theorem \ref{thm}]  Theorem \ref{thm} follows as a  direct consequence of  Theorems \ref{thm1} and \ref{thm2}. 

\end{proof}

\centerline{\bf Acknowledgements}

\medskip

\noindent P. Daskalopoulos  has been partially supported by NSF grant DMS-1266172.\\
M. del Pino  has been supported by grants Fondecyt 1110181 and Fondo Basal CMM.\\
N. Sesum has been partially supported by NSF grant  DMS-1056387.


\begin{thebibliography}{100}

\bibitem{S2} Brendle, S., {\em  Convergence of the Yamabe flow for arbitrary initial energy}, J. Differential Geom. {\bf 69}  (2005), 217--278.


\bibitem{S1} Brendle, S., {\em  Convergence of the Yamabe flow in dimension 6 and higher},  Invent. Math. {\bf 170}  (2007), 541--576.


\bibitem{Cao}  Cao, H-D., Sun, X., Zhang, Y., {\em  On the structure of gradient Yamabe solitons},  Math. Res. Lett. {\bf 19} (2012),  767--774.


 \bibitem{Ch} Chow, B., {\em The Yamabe flow on locally conformally
flat manifolds with positive Ricci curvature}, Comm. Pure Appl. Math. {\bf 65}  (1992), 1003--1014.

\bibitem{DS1} Daskalopoulos, P., Hamilton, R.   and Sesum, N., {\em Classification of ancient solutions to the Ricci flow on surfaces}, J. Differential Geom. 91 (2012), 171--214.

\bibitem{DKS}  Daskalopoulos, P., King, J.R. and Sesum, N.,  {\em Extinction profile of complete non-compact solutions to the Yamabe flow}, arXiv:1306.0859. 

\bibitem{DS2} Daskalopoulos, P., Sesum, N., {\em Classification of conformally flat Yamabe solitons};  Adv. Math. {\bf 240}  (2013), 346--369.

\bibitem{DDS} {\em Type II ancient compact solutions to the Yamabe flow}, to appear in J. Reine Angew. Math.. 

\bibitem{DS} del Pino, M.; S\'aez, M., On the extinction profile for
solutions of $u\sb t=\Delta u\sp {(N-2)/(N+2)}$. Indiana Univ.
Math. J.  {\bf 50}  (2001),  611--628.


\bibitem{HN} Hamel, F.; Nadirashvili, N. {\em Entire solutions of the KPP equation},  Comm. Pure Appl. Math. {\bf 52}  (1999),  1255--1276.

\bibitem{H} R. S. Hamilton, Lectures on geometric flows, 1989, unpublished.




%

\bibitem{K1} King, J.R., {\em  Exact polynomial solutions to some nonlinear diffusion equations},   Physica. D {\bf 64} (1993),
39--65.

\bibitem{K2} King, J.R.,  {\em Asymptotic results for nonlinear diffusion},  European J. Appl. Math. {\bf 5} (1994),  359--390.


%
%


\bibitem{R} Rosenau, P., {\em Fast and superfast diffusion processes}, Phys. Rev. Lett. {\bf 74}  (1995), 1056--1059.


\bibitem{S} Schoen,R.,  {\em The existence of weak solutions with prescribed singular behavior for a con- formally invariant scalar equation}, Comm. Pure and Appl. Math. XLI (1988), 317-392.

\bibitem{SS} Schwetlick, H.; Struwe, M. {\em  Convergence of the Yamabe flow for "large'' energies},
J. Reine Angew. Math. {\bf 562}  (2003), 59--100.



\bibitem{Va} Vazquez, J.L., {\em Asymptotic behaviour for the porous medium equation posed in the whole space}, J.Evol.Equ. 3(2003), 67--118.

\bibitem{V} V\'azquez, J. L., {\em Smoothing and decay estimates for nonlinear diffusion equations}, 
Oxford Lecture Series in Mathematics and its Applications  {\bf 33}  Oxford University Press, Oxford, 2006. 


\bibitem{Y} Ye, R. {\em Global existence and convergence of Yamabe flow}, J. Differential Geom. {\bf 39} (1994), 35--50.



\end{thebibliography}
\end{document}